\pgfplotsset{width=9cm,compat=1.5.1}
\newtheorem{lemma}{Lemma}
\newtheorem{theorem}{Theorem}
\newtheorem{corollary}{Corollary}
\newtheorem{proposition}{Proposition}
\newtheorem{example}{Example}
\newcommand{\be}{\mathbf e}
\begin{document}
\title{Fractional revival between twin vertices}

\author{
	Hermie Monterde,\textsuperscript{\!\!1}
}

\maketitle

%
%
%









\begin{abstract}
In this paper, we provide a characterization of fractional revival between twin vertices in a weighted graph with respect to its adjacency, Laplacian and signless Laplacian matrices. As an application, we characterize fractional revival between apexes of double cones.
\end{abstract}

\noindent \textbf{Keywords:} fractional revival, perfect state transfer, twin vertices, adjacency matrix, Laplacian matrix, signless Laplacian matrix\\
	
\noindent \textbf{MSC2010 Classification:} 
05C50; 
15A18;  
05C22; 
81P45; 
81A10 


\addtocounter{footnote}{1}
\footnotetext{Department of Mathematics, University of Manitoba, Winnipeg, MB, Canada R3T 2N2}

\tableofcontents

\section{Introduction}\label{secINTRO}

Continuous-time quantum walks were introduced by Farhi and Gutmann in 1998 \cite{Farhi1998}, and since then have become central in the study of various types of state transfer in quantum spin networks. In quantum information theory, continuous-time quantum walks that exhibit types of state transfer that allow for the generation of entanglements is considered desirable. One such type of state transfer is fractional revival.

Fractional revival from a mathematical standpoint had been studied under adjacency dynamics \cite{Chan2019,CHAN20221}, Laplacian dynamics \cite{Chan2020}, in association schemes \cite{CHAN20201}, abelian Cayley graphs \cite{Cao2022}, threshold graphs \cite{Kirkland2020} and non-cospectral vertices \cite{Godsil2022}. In this paper, we study fractional revival between twin vertices with respect to the adjacency, Laplacian and signless Laplacian matrices.

The outline of this paper is as follows. In Section \ref{secTV}, we review a handful of known state transfer properties of twin vertices and show that proper fractional revival is monogamous when it involves a vertex with a twin. In Section \ref{secFR}, we provide characterizations (Theorems \ref{frtwinschar} and \ref{anotherchar}) as well as necessary and sufficient conditions (Corollaries \ref{frtwinschar1} and \ref{frtwinschar2}) for fractional revival to occur between twin vertices. In Section \ref{secFR1}, we explore the existence of periodicity, perfect state transfer, and pretty good state transfer between twin vertices that admit proper fractional revival. In Section \ref{secCharPoly}, we explore what happens to our results in Sections \ref{secFR} and \ref{secFR1} if we add that $\phi(M,t)\in\mathbb{Z}[x]$. We then apply our results in Section \ref{secDC} to obtain a characterization of double cones that admit proper and balanced fractional revival. The remainder of this section is devoted to relevant graph and matrix theory background, as well as basic notions of state transfer.

Throughout this paper, we assume that $X$ is a connected weighted undirected graph with possible loops but no multiple edges, and we denote its vertex set by $V(X)$. We allow the edges of $X$ to have non-zero real weights. We say that $X$ is \textit{simple} if $X$ has no loops, and $X$ is \textit{unweighted} if all edges of $X$ have weight one. For $u\in V(X)$, we denote the characteristic vector of $u$ as $\textbf{e}_u$, which is a vector with a $1$ on the entry indexed by $u$ and $0$'s elsewhere, and the set of neighbours of $u$ by $N_X(u)$. We also represent the characteristic polynomial of a square matrix $H$ in the variable $t$ by $\phi(H,t)$. Lastly, we denote the simple unweighted empty, cycle, complete, and path graphs on $n$ vertices as $O_n$, $C_n$, $K_n$, and $P_n$, respectively.

The adjacency matrix $A(X)$ of $X$ is defined entrywise as $A(X)_{u,v}=\omega_{u,v}$ if $u$ is adjacent to $v$, where $\omega_{u,v}$ is the weight of the edge $(u,v)$, and $A(X)_{u,v}=0$ otherwise. The degree matrix $D(X)$ of $X$ is the diagonal matrix of vertex degrees of $X$, where $\operatorname{deg}(u)=2\omega_{u,u}+\sum_{j\neq u}\omega_{u,j}$ for each $u\in V(X)$. The Laplacian matrix $L(X)$ of $X$ is the matrix $L(X)=D(X)-A(X)$, while the signless Laplacian matrix $Q(X)$ of $X$ is the matrix $Q(X)=D(X)+A(X)$. We use $M(X)$ to denote $A(X)$, $L(X)$ or $Q(X)$. If the context is clear, then we simply write $M(X)$, $A(X)$, $L(X)$, $Q(X)$ and $D(X)$ as $M$, $A$, $L$, $Q$ and $D$, resp.

The \textit{(continuous-time) quantum walk} on $X$ with respect to $H$ is determined by the transition matrix
\begin{equation}
\label{M}
U_H(t)=e^{itH}.
\end{equation}
The matrix $H$ is called the \textit{Hamiltonian} of the quantum walk. Typically, $H$ is taken to be $A$, $L$, or $Q$, but in general, any Hermitian matrix $H$ that respects the adjacencies of $X$ should work (that is, $H_{u,v}=0$ if and only if there is no edge between $u$ and $v$). If $H=M$ and $M$ is clear from the context, then we simply write $U_M(t)$ as $U(t)$. Since $M$ is real symmetric, we can write $M$ in its spectral decomposition as $M=\sum_{j}\lambda_jE_j$, where the $\lambda_j$'s are the distinct eigenvalues of $M$ and $E_j$ is the orthogonal projection matrix onto the eigenspace associated with $\lambda_j$. If the eigenvalues are not indexed, then we also denote by $E_{\lambda}$ the orthogonal projection matrix corresponding to the eigenvalue $\lambda$ of $M$. This allows us to write (\ref{M}) as
\begin{equation}
\label{specdecM}
U(t)=\sum_{j}e^{it\lambda_j}E_j.
\end{equation}
Note that $U(t)$ is a complex symmetric unitary matrix, and so $\sum_{j=1}|U(\tau)_{u,j}|^2=1$ for any vertex $u$ of $X$. For this reason, if $u$ and $v$ are vertices of $X$, then $|U(\tau)_{u,v}|^2$ is interpreted as the fidelity (probability) of quantum state transfer between $u$ and $v$ at time $\tau$. It is known that if $X$ is simple and weighted $k$-regular, the quantum walks with respect to $A$, $L$, and $Q$ are equivalent up to a global phase, and so they all exhibit the same types of state transfer. Similarly, if $X$ is bipartite, then the quantum walks with respect to $L$ and $Q$ are equivalent up to a local phase, and so they again exhibit the same types of state transfer.

We say that \textit{perfect state transfer} (PST) occurs between $u$ and $v$ at time $\tau$ if $|U(\tau)_{u,v}|^2=1$. We say that $u$ is \textit{periodic} at time $\tau$ if $|U(\tau)_{u,u}|^2=1$. We say that \textit{pretty good state transfer} occurs between $u$ and $v$ if for every $\epsilon>0$, there exists a time $\tau_{\epsilon}$ such that $|U(\tau_{\epsilon})_{u,v}|^2>1-\epsilon$. We say that $(\alpha,\beta)$-\textit{fractional revival} (FR) occurs from $u$ to $v$ at time $\tau$, where $\alpha=|U(\tau)_{u,u}|$ and $\beta=|U(\tau)_{u,v}|$, if $|U(\tau)_{u,u}|^2+|U(\tau)_{u,v}|^2=1$. If the particular $\alpha$ and $\beta$ are not important, then we simply say fractional revival. The case $\beta\neq 0$ is called \textit{proper} FR, while the case $|\alpha|=|\beta|$ is called \textit{balanced} FR. Note that FR is a generalization of PST and periodicity. We sometimes say adjacency PST (resp., periodicity, FR) when we talk about PST (resp., periodicity, FR) whenever $M=A$; similar language applies when $M=L,Q$.

\section{Twin vertices}\label{secTV}

Two distinct vertices $u$ and $v$ of $X$ are \textit{twins} if (i) $N_X(u)\backslash \{u,v\}=N_X(v)\backslash \{u,v\}$, (ii) the edges $(u,w)$ and $(v,w)$ have the same weight for each $w\in N_X(u)\backslash \{u,v\}$, and (iii) if there are loops on $u$ and $v$, then they have the same weight. Given $\omega,\eta\in\mathbb{R}$, a subset $T=T(\omega,\eta)$ of $V(X)$ with at least two vertices is a \textit{set of twins} in $X$ if the vertices in $T$ are pairwise twins. That is, each vertex in $T$ has a loop of weight $\omega$ (which is absent if $\omega=0$), and every pair of vertices in $T$ form an edge with weight $\eta$ (which is absent if $\eta=0$).

We now state a spectral characterization of twin vertices due to Monterde \cite[Lemma 2.9]{MonterdeELA}.

\begin{lemma}
\label{alphabeta}
Let $T=T(\omega,\eta)$ be a set of twins in $X$. Then $u,v\in T$ if and only if both conditions hold:
\begin{enumerate}
\item $\be_u-\be_v$ is an eigenvector of $M$, and
\item the eigenvalue corresponding to $\be_u-\be_v$ is given by
\begin{equation}
\label{adjalpha}
\theta=
\begin{cases}
 \omega-\eta, &\text{if $M=A$}\\
 \text{deg}(u)-\omega+\eta, &\text{if $M=L$}\\
 \text{deg}(u)+\omega-\eta, &\text{if $M=Q$}.
\end{cases}
\end{equation}
\end{enumerate}
\end{lemma}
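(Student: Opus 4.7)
The plan is to prove both directions by a direct coordinate-wise computation of $M(\be_u - \be_v)$, handled uniformly for $M \in \{A, L, Q\}$ via the splittings $L = D - A$ and $Q = D + A$. For the forward direction, assume $u, v \in T(\omega, \eta)$. For any $w \notin \{u, v\}$, twin conditions (i)--(ii) give $A_{w,u} = A_{w,v}$, so the $w$-coordinate of $A(\be_u - \be_v)$ vanishes. At coordinate $u$, the loop weight $\omega$ on $u$ together with the twin edge weight $\eta$ give $A_{u,u} - A_{u,v} = \omega - \eta$, and symmetry yields $-(\omega-\eta)$ at coordinate $v$, which proves the adjacency case. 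For $L$ and $Q$, I would then note that the twin conditions force $\deg(u) = \deg(v)$, so $D(\be_u - \be_v) = \deg(u)(\be_u - \be_v)$; combining this with the adjacency computation immediately yields the remaining formulas in (\ref{adjalpha}).

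For the reverse direction, suppose $M(\be_u - \be_v) = \theta(\be_u - \be_v)$ with $\theta$ as in (\ref{adjalpha}). Reading the $w$-th coordinate for $w \notin \{u, v\}$ gives $M_{w,u} = M_{w,v}$; since $D$ contributes only on the diagonal, this reduces to $A_{w,u} = A_{w,v}$, which is exactly twin conditions (i)--(ii). The $u$- and $v$-coordinates of the eigenvector equation, together with the symmetry $M_{u,v} = M_{v,u}$, then pin down the loop weights on $u$ and $v$ and the edge weight between them so that they are compatible with $\omega$ and $\eta$, placing $u, v \in T(\omega, \eta)$.

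The argument presents no substantive obstacle---it is essentially a direct unpacking of the definitions---so the only care required is the bookkeeping of diagonal contributions in the Laplacian and signless Laplacian cases, both in deriving $\deg(u) = \deg(v)$ for the forward direction and in cleanly separating $D$ from $A$ when recovering the loop and edge weights on the reverse side.
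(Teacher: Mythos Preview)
The paper does not prove this lemma itself; it is quoted from \cite[Lemma~2.9]{MonterdeELA}. Your direct coordinate-wise computation of $M(\be_u-\be_v)$, handling $L$ and $Q$ via $D(\be_u-\be_v)=\deg(u)(\be_u-\be_v)$ once $\deg(u)=\deg(v)$ is established, is the standard argument and is correct---and is almost certainly what the cited source does.
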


The \textit{eigenvalue support} of $u$ with respect to $M$, denoted $\sigma_u(M)$, is the set
\begin{equation*}
\sigma_u(M)=\{\lambda_j:E_j\textbf{e}_u\neq \textbf{0}\}.
\end{equation*}
With respect to $M$, we say that $u$ and $v$ are \textit{cospectral} if $(E_j)_{u,u}=(E_j)_{v,v}$ for each $j$, \textit{parallel} if for each $j$, $E_j\textbf{e}_u=c_j E_j\textbf{e}_v$ for some constant $c_j$, and \textit{strongly cospectral} if $E_j\textbf{e}_u=\pm E_j\textbf{e}_v$ for each $j$. Note that if $u$ and $v$ are strongly cospectral, then $\sigma_u(M)=\sigma_v(M)$ and we can write $\sigma_u(M)=\sigma_{uv}^+(M)\cup \sigma_{uv}^-(M)$, where
\begin{equation*}
\sigma_{uv}^+(M)=\{\lambda_j:E_j\textbf{e}_u=E_j\textbf{e}_v\}\quad \text{and}\quad \sigma_{uv}^-(M)=\{\lambda_j:E_j\textbf{e}_u=-E_j\textbf{e}_v\}.
\end{equation*}
If $u$ and $v$ are twins, then Lemma \ref{alphabeta} implies that $\theta\in\sigma_u(M)$. Next, we state an algebraic characterization and a useful property of twin vertices (see Lemma 2.10 and Corollary 2.11 in \cite{MonterdeELA} resprectively).

\begin{lemma}
\label{aut}
Vertices $u$ and $v$ are twins in $X$ if and only if there exists an involution on $X$ that switches $u$ and $v$ and fixes all other vertices. Moreover, twin vertices are cospectral with respect to $M$.
\end{lemma}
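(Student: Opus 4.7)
The plan is to establish the two directions of the equivalence directly from the definition of twins, and then derive cospectrality as a standard consequence of the existence of a weight-preserving automorphism swapping $u$ and $v$.

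For the forward direction, I would define $\pi:V(X)\to V(X)$ to be the transposition that swaps $u$ and $v$ and fixes every other vertex. Clearly $\pi^2=\operatorname{id}$, so I only need to verify that $\pi$ is a weight-preserving automorphism. I would break the verification of $\omega_{a,b}=\omega_{\pi(a),\pi(b)}$ into cases: pairs disjoint from $\{u,v\}$ are trivial; pairs with exactly one element in $\{u,v\}$ are handled by twin conditions (i) and (ii); the pair $\{u,v\}$ is fixed by $\pi$; and the loops at $u$ and $v$ swap, handled by condition (iii). For the reverse direction, I would simply read the three twin conditions off the edge-preservation property of $\pi$: any edge $(u,w)$ with $w\notin\{u,v\}$ is mapped to $(v,w)$ with the same weight (giving (i) and (ii)), and the loop at $u$ is mapped to the loop at $v$ (giving (iii)).

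For the cospectrality claim, let $P$ denote the permutation matrix of $\pi$, so that $P\be_u=\be_v$ and $P^TP=I$. Since $\pi$ is a weight-preserving automorphism, $P$ commutes with $A$, and because $\pi$ necessarily preserves vertex degrees, $P$ also commutes with $D$; hence $P$ commutes with $M\in\{A,L,Q\}$, and therefore with each spectral idempotent $E_j$. I would then compute
\[
(E_j)_{v,v}=\be_v^T E_j\be_v=(P\be_u)^T E_j(P\be_u)=\be_u^T\bigl(P^T E_j P\bigr)\be_u=\be_u^T E_j\be_u=(E_j)_{u,u},
\]
which is precisely the definition of cospectrality with respect to $M$.

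This is essentially a bookkeeping argument with no serious obstacle. The only mild subtlety is matching the three items in the definition of twins (in particular the loop condition) to the edge- and weight-preservation requirements of an involutive automorphism in the weighted, possibly-looped setting; once that correspondence is spelled out, both the combinatorial equivalence and the spectral consequence fall out immediately.
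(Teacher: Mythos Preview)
Your argument is correct: the case-by-case verification that the transposition $\pi$ preserves all edge weights (including loops) is exactly what the definition of twins encodes, and the cospectrality computation via $PE_j=E_jP$ is standard and valid. Note that the paper does not actually supply a proof of this lemma; it simply imports the statement from \cite{MonterdeELA} (Lemma~2.10 and Corollary~2.11), so there is no in-paper argument to compare against, but your proof is precisely the expected direct verification and would serve as a self-contained replacement for the citation.
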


The next result states that $\sigma_{uv}^-(M)$ is a singleton set whenever $u$ and $v$ are twin vertices that are strongly cospectral \cite[Theorem 3.9]{MonterdeELA}.

\begin{lemma}
\label{strcospchar}
Let $T=\{u,v\}$ be a set of twins in $X$, and consider $\theta$ in (\ref{adjalpha}). If $u$ and $v$ are strongly cospectral, then $\sigma_{uv}^-(M)=\{\theta\}$, and $u$ and $v$ cannot be strongly cospectral to any $w\in V(X)\backslash\{u,v\}$.
\end{lemma}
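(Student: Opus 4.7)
The plan is to leverage Lemma \ref{alphabeta}, which guarantees that $\be_u-\be_v$ is an eigenvector of $M$ with eigenvalue $\theta$, so $E_\theta(\be_u-\be_v)=\be_u-\be_v$. Strong cospectrality gives $E_\theta\be_u=\pm E_\theta\be_v$; the $+$ sign would kill the left-hand side, so we must have the $-$ sign, i.e.\ $\theta\in\sigma_{uv}^-(M)$, and consequently
\[
E_\theta\be_u=\tfrac{1}{2}(\be_u-\be_v).
\]
This identity will be the engine driving both halves of the proof.

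To establish $\sigma_{uv}^-(M)=\{\theta\}$, I would apply $\sum_j E_j=I$ to $\be_u-\be_v$, yielding $\sum_j(E_j\be_u-E_j\be_v)=\be_u-\be_v$. Under strong cospectrality, every $\lambda_j\in\sigma_{uv}^+(M)$ contributes zero while every $\lambda_j\in\sigma_{uv}^-(M)$ contributes $2E_j\be_u$, so
\[
\sum_{\lambda_j\in\sigma_{uv}^-(M)} E_j\be_u=\tfrac{1}{2}(\be_u-\be_v)=E_\theta\be_u.
\]
Hence the partial sum over $\lambda_j\in\sigma_{uv}^-(M)\setminus\{\theta\}$ is zero. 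Since the vectors $E_j\be_u$ for distinct $j$ lie in pairwise orthogonal eigenspaces, each summand must vanish individually, which contradicts $\lambda_j\in\sigma_u(M)$ unless no such $\lambda_j$ exists.

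For the second claim, suppose for contradiction that some $w\in V(X)\backslash\{u,v\}$ is strongly cospectral to $u$. Reading off the $u$-entry of $E_\theta\be_u=\tfrac12(\be_u-\be_v)$ gives $(E_\theta)_{u,u}=\tfrac{1}{2}$, and strong cospectrality forces ordinary cospectrality (via $(E_j)_{u,u}=\|E_j\be_u\|^2$), so $(E_\theta)_{w,w}=\tfrac{1}{2}$ as well. On the other hand, strong cospectrality of $u$ and $w$ gives $E_\theta\be_u=\pm E_\theta\be_w$, whose $w$-entry is $\pm(E_\theta)_{w,w}=\pm\tfrac{1}{2}$; but the $w$-entry of $\tfrac{1}{2}(\be_u-\be_v)$ is $0$, a contradiction.

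The entire argument is spectral bookkeeping, with no real obstacle once the identity $E_\theta\be_u=\tfrac12(\be_u-\be_v)$ is in hand; the mild subtlety is recognizing that orthogonality of eigenspaces, rather than any positivity or norm estimate, is what collapses the sum in the first part, and that the diagonal entry computation in the second part is what forces a geometric contradiction.
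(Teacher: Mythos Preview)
Your proof is correct. Note, however, that the paper does not actually supply a proof of this lemma; it merely cites \cite[Theorem 3.9]{MonterdeELA}, so there is no in-paper argument to compare against. Your spectral bookkeeping via the identity $E_\theta\be_u=\tfrac12(\be_u-\be_v)$ is a clean, self-contained route: the orthogonality argument for $\sigma_{uv}^-(M)=\{\theta\}$ and the diagonal-entry contradiction for the non-existence of a third strongly cospectral vertex are both sound. One small remark: you show that $u$ cannot be strongly cospectral to any $w\notin\{u,v\}$; the conclusion for $v$ then follows either by symmetry of the twin pair or by transitivity of strong cospectrality through $u$.
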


Combining Lemma \ref{strcospchar} with \cite[Theorem 3.4]{MonterdeELA}) yields $|\sigma_{uv}^+(M)|\geq 2$. We also state a result of Kirkland et al.\ about transition matrices of graphs with twin vertices \cite[Corollary 2]{Monterde2022}.

\begin{corollary}
\label{Lan}
Let $T$ be a set of twins in $X$. If $u,v\in T$ with $u\neq v$, then
\begin{equation}
\label{eq21}
|U(t)_{u,u}|^2+\left(|T|-1\right)|U(t)_{u,v}|^2+\sum_{w\notin T}|U(t)_{u,w}|^2=1
\end{equation}
for any $t\in\mathbb{R}$. If we add that $|T|\geq 3$, then $U(t)_{u,u}\neq 0$ for any $t\in\mathbb{R}$.
\end{corollary}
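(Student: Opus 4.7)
The plan is to combine Lemma~\ref{aut} with the unitarity (and complex symmetry) of $U(t)$. The observation I would invoke repeatedly is that for every pair of vertices in $T$, the transposition that swaps them and fixes every other vertex is a graph automorphism of $X$; hence the corresponding permutation matrix commutes with $M$ and therefore with $U(t)=e^{itM}$. First I would apply this to any two vertices $v_1,v_2 \in T\setminus\{u\}$, which fixes $u$, to conclude $U(t)_{u,v_1}=U(t)_{u,v_2}$; in particular $|U(t)_{u,v}|$ takes a single value as $v$ ranges over $T\setminus\{u\}$. Equation (\ref{eq21}) then drops out of the unitarity identity $\sum_{w\in V(X)}|U(t)_{u,w}|^2 = 1$ after grouping terms according to whether $w=u$, $w\in T\setminus\{u\}$, or $w\notin T$.

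For the second assertion, I would fix some $v\in T\setminus\{u\}$ and exploit orthogonality of the $u$-th and $v$-th columns of $U(t)$. Iterating the symmetry argument across different transpositions in $T$ and using $U(t)=U(t)^{\top}$ (which follows from $M=M^{\top}$) produces scalars $\alpha,\beta\in\C$ with $U(t)_{x,x}=\alpha$ for every $x\in T$, $U(t)_{x,y}=\beta$ for all distinct $x,y\in T$, and $U(t)_{x,w}=U(t)_{y,w}$ whenever $x,y\in T$ and $w\notin T$. Splitting the column-orthogonality relation accordingly should yield
\begin{equation*}
0 \;=\; \sum_{w\in V(X)} \overline{U(t)_{w,u}}\, U(t)_{w,v} \;=\; 2\Re{\overline{\alpha}\beta} + (|T|-2)|\beta|^2 + \sum_{w\notin T}|U(t)_{u,w}|^2.
\end{equation*}

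If $\alpha=U(t)_{u,u}=0$, then the displayed identity collapses to $(|T|-2)|\beta|^2 + \sum_{w\notin T}|U(t)_{u,w}|^2 = 0$. Since $|T|\geq 3$ and the remaining summands are nonnegative, this forces $\beta=0$ and $U(t)_{u,w}=0$ for every $w\notin T$, which would make the $u$-th column of $U(t)$ identically zero and contradict unitarity. The main obstacle I anticipate is purely bookkeeping: verifying that the two scalars $\alpha$ and $\beta$ arise consistently from the various transpositions, and correctly accounting for both the $\overline{\alpha}\beta$ and $\overline{\beta}\alpha$ contributions when the entries indexed by $\{u,v\}$ are peeled off the orthogonality sum. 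The hypothesis $|T|\geq 3$ is used exactly once, to ensure that the coefficient $|T|-2$ is strictly positive in the final contradiction.
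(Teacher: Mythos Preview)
Your argument is correct. The paper does not supply its own proof of this corollary; it merely quotes the result from Kirkland et al.\ \cite[Corollary~2]{Monterde2022}, so there is no in-paper proof to compare against. For the record, your approach is the natural one: the transpositions on $T$ furnish permutation matrices commuting with $M$ (they automatically commute with $D$ as well, since twins share the same degree, so the argument covers $L$ and $Q$ in addition to $A$), and the first identity then follows from the row-normalization of $U(t)$. Your derivation of the orthogonality relation
\[
0=2\Re{\overline{\alpha}\beta}+(|T|-2)|\beta|^2+\sum_{w\notin T}|U(t)_{u,w}|^2
\]
is also accurate---the two cross terms $\overline{\alpha}\beta$ and $\overline{\beta}\alpha$ from $w\in\{u,v\}$ combine into $2\Re{\overline{\alpha}\beta}$ exactly as you wrote, and the equality $U(t)_{v,w}=U(t)_{u,w}$ for $w\notin T$ follows from the transposition $(u\ v)$. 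The bookkeeping you flagged as a potential obstacle is routine: the constancy of $\beta$ across all off-diagonal pairs in $T$ follows by combining the row-constancy you already established with the symmetry $U(t)^{\top}=U(t)$.
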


From (\ref{eq21}), if $T$ is a set of twins in $X$ and $u,v\in T$ such that $u\neq v$, then $|U(t)_{u,v}|^2\leq \frac{1}{|T|-1}$ for all $t\in\mathbb{R}$, and this inequality is strict whenever $|T|\geq 3$. This yields a result similar to \cite[Corollary 3]{Monterde2022}.

\begin{corollary}
\label{nopstpgsttw}
Suppose $X$ has $n\geq 3$ vertices and $T$ is a set of twins in $X$. Then no vertex in $T$ can be involved in proper fractional revival with vertex that is not in $T$. Moreover, if $|T|\geq 3$, then any vertex in $T$ cannot be involved in proper fractional revival with any vertex in $X$.
\end{corollary}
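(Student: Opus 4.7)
The plan is to combine the unitarity identity of Corollary \ref{Lan} with the eigenvector property of Lemma \ref{alphabeta} for the first assertion, and to combine the former with a counting argument for the second.

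For the first assertion, I would argue by contradiction. Suppose proper FR from some $u \in T$ to some $w \notin T$ occurs at time $\tau$, so that $|U(\tau)_{u,u}|^2 + |U(\tau)_{u,w}|^2 = 1$ with $|U(\tau)_{u,w}| > 0$. Picking any $v \in T \setminus \{u\}$ (which exists since $|T| \geq 2$) and subtracting this identity from (\ref{eq21}) yields
\begin{equation*}
(|T|-1)|U(\tau)_{u,v}|^2 + \sum_{w' \notin T,\ w' \neq w} |U(\tau)_{u,w'}|^2 = 0.
\end{equation*}
Since every summand is nonnegative, each vanishes; in particular $U(\tau)_{u,v} = 0$. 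By Lemma \ref{alphabeta}, $\be_u - \be_v$ is an eigenvector of $M$ with eigenvalue $\theta$, so $U(\tau)(\be_u - \be_v) = e^{i\tau\theta}(\be_u - \be_v)$; taking the $u$-th coordinate gives $U(\tau)_{u,u} - U(\tau)_{u,v} = e^{i\tau\theta}$. Combined with $U(\tau)_{u,v} = 0$, this forces $|U(\tau)_{u,u}| = 1$, hence $|U(\tau)_{u,w}| = 0$, contradicting properness.

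For the second assertion, assuming $|T| \geq 3$ (so in particular $n \geq 3$), the case of a target vertex outside $T$ is already covered by the first assertion. For a target $v \in T \setminus \{u\}$, proper FR at time $\tau$ gives $|U(\tau)_{u,u}|^2 + |U(\tau)_{u,v}|^2 = 1$ with $|U(\tau)_{u,v}| > 0$; subtracting this from (\ref{eq21}) yields
\begin{equation*}
(|T|-2)|U(\tau)_{u,v}|^2 + \sum_{w' \notin T} |U(\tau)_{u,w'}|^2 = 0,
\end{equation*}
which with $|T| \geq 3$ forces $|U(\tau)_{u,v}| = 0$, contradicting properness. The conceptual obstacle lies in the first assertion: the unitarity identity alone only pins down $U(\tau)_{u,v} = 0$, and the additional leverage needed to deduce $|U(\tau)_{u,u}| = 1$ must come from the explicit eigenvector $\be_u - \be_v$ guaranteed by the twin hypothesis, which is why Lemma \ref{alphabeta} enters the argument in an essential way.
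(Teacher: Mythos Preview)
Your proof is correct. The paper does not give a detailed argument for this corollary; it simply remarks that equation~(\ref{eq21}) yields the bound $|U(t)_{u,v}|^2\leq \frac{1}{|T|-1}$ (strict when $|T|\geq 3$) for $u,v\in T$, and then cites an analogous result from \cite{Monterde2022}. For the second assertion your subtraction from~(\ref{eq21}) is exactly the content of that inequality, so the two approaches coincide. For the first assertion your argument is more explicit than the paper's sketch: equation~(\ref{eq21}) alone only forces $U(\tau)_{u,v}=0$, and you correctly bring in Lemma~\ref{alphabeta} to get $U(\tau)_{u,u}-U(\tau)_{u,v}=e^{i\tau\theta}$ and hence $|U(\tau)_{u,u}|=1$, which closes the contradiction. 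This extra step is genuinely needed and is precisely the twin-specific input the paper leaves implicit.
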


The following result is an immediate consequence of Corollary \ref{nopstpgsttw}.

\begin{corollary}
\label{mono}
Let $u$ be a vertex of $X$ with a twin $v$. Then $u$ can only pair with at most one vertex in $X$ for proper fractional revival. Moreover, if $u$ is involved in proper fractional revival, then it must be with $v$.
\end{corollary}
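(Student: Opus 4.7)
My approach is straightforward since, as the paper itself notes, this is essentially a reformulation of Corollary \ref{nopstpgsttw} from the perspective of a single vertex $u$. Let $T$ be a set of twins in $X$ that contains both $u$ and its given twin $v$, so $|T|\geq 2$. I will split on whether $|T|=2$ or $|T|\geq 3$; the case $|V(X)|=2$ is trivial (there is only one other vertex, namely $v$, so both claims are immediate), and otherwise $|V(X)|\geq 3$ and Corollary \ref{nopstpgsttw} applies.

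If $T=\{u,v\}$, then the first conclusion of Corollary \ref{nopstpgsttw} rules out proper fractional revival between $u$ and any vertex outside $T$. Since the only element of $T$ other than $u$ is $v$, any proper FR partner of $u$ must equal $v$; in particular, $u$ has at most one such partner. If instead $|T|\geq 3$, the moreover clause of Corollary \ref{nopstpgsttw} forbids proper fractional revival between $u$ and any vertex of $X$ whatsoever. Hence $u$ has zero, and in particular at most one, proper FR partner, and the implication ``if $u$ is involved in proper FR, then with $v$'' holds vacuously.

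I foresee no real obstacle: both assertions drop out immediately from the case split on $|T|$ together with Corollary \ref{nopstpgsttw}. The only point that warrants a brief comment is the vacuous-truth bookkeeping when $u$ has more than one twin, which is harmless but should be mentioned so that the reader sees why the statement is phrased in terms of an arbitrary chosen twin $v$.
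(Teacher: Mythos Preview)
Your proposal is correct and matches the paper's approach: the paper simply records Corollary~\ref{mono} as an immediate consequence of Corollary~\ref{nopstpgsttw}, and your case split on $|T|$ (plus the easy $|V(X)|=2$ observation to cover the hypothesis $n\geq 3$ in Corollary~\ref{nopstpgsttw}) is exactly the intended unpacking. Note that the $|T|\geq 3$ branch is actually unnecessary, since applying Corollary~\ref{nopstpgsttw} with $T=\{u,v\}$ already forces any proper FR partner of $u$ to lie in $\{u,v\}$; your extra case does no harm but could be dropped.
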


Chan et al.\ have constructed infinite families of graphs where a vertex may be paired with two others for proper FR \cite{Chan2020}. But as Corollary \ref{mono} implies, proper FR involving a vertex with a twin is monogamous.

We say that a subset $S$ of $\sigma_u(M)$ with at least two elements satisfy the \textit{ratio condition} if 
\begin{equation*}
\dfrac{\lambda_p-\lambda_q}{\lambda_j-\lambda_k}\in\mathbb{Q},
\end{equation*}
for all $\lambda_p,\lambda_q,\lambda_j,\lambda_k\in S$ with $\lambda_j\neq \lambda_k$
The following fact is due to Coutinho and Godsil \cite{Coutinho2021} (Corollary 7.3.1 and Theorem 7.6.1) which characterizes periodic vertices.

\begin{theorem}
\label{ratiocon}
Let $X$ be a weighted graph with possible loops. The following statements are equivalent. 
\begin{enumerate}
\item Vertex $u$ is periodic in $X$ with respect to $M$.
\item $\sigma_u(M)=\{\lambda_1,\ldots,\lambda_n\}$ satisfies the ratio condition.
\end{enumerate}
If we add that $\phi(M,t)\in \mathbb{Z}[x]$, then $u$ is periodic if and only if either (i) $\sigma_u(M)\subseteq \mathbb{Z}$, or (ii) there is a square-free integer $\Delta>1$ and an integer $a$ such that each $\lambda_j=\frac{1}{2}\left(a+c_j\sqrt{\Delta}\right)$ for some integer $c_j$, and the difference between any two eigenvalues in $\sigma_u(M)$ is an integer multiple of $\sqrt{\Delta}$.
\end{theorem}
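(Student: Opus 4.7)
The plan is to analyze periodicity through the spectral decomposition $U(\tau)=\sum_j e^{it\lambda_j}E_j$, and then lift the resulting exponential condition to the arithmetic conditions (i) and (ii) under the hypothesis $\phi(M,t)\in\mathbb{Z}[x]$.

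For the equivalence of (1) and (2), I would first observe that since $U(\tau)$ is unitary, $|U(\tau)_{u,u}|^2=1$ forces $U(\tau)\be_u$ to be a unit scalar multiple of $\be_u$, say $\gamma\be_u$ with $|\gamma|=1$. Writing $\be_u=\sum_{\lambda_j\in\sigma_u(M)}E_j\be_u$ and $U(\tau)\be_u=\sum_{\lambda_j\in\sigma_u(M)}e^{i\tau\lambda_j}E_j\be_u$, the mutual orthogonality and nonzeroness of the vectors $E_j\be_u$ (for $\lambda_j\in\sigma_u(M)$) yield the single pointwise condition $e^{i\tau\lambda_j}=\gamma$ for every $\lambda_j\in\sigma_u(M)$, or equivalently $\tau(\lambda_j-\lambda_k)\in 2\pi\mathbb{Z}$ for all $\lambda_j,\lambda_k\in\sigma_u(M)$. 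The existence of a positive $\tau$ realizing this system is precisely the ratio condition: the forward direction follows by dividing two such integer multiples of $2\pi$, and the backward direction follows by fixing $\lambda_1\neq\lambda_2$ in $\sigma_u(M)$, writing $\lambda_j-\lambda_k = r_{j,k}(\lambda_1-\lambda_2)$ with $r_{j,k}\in\mathbb{Q}$, clearing denominators to obtain a common integer $N$, and then taking $\tau=2\pi N/(\lambda_1-\lambda_2)$.

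For the refinement under $\phi(M,t)\in\mathbb{Z}[x]$, the eigenvalues become algebraic integers, and the key preliminary is that $\sigma_u(M)$ is closed under Galois conjugation over $\mathbb{Q}$. This follows from the Lagrange expression $E_\lambda=\prod_{\mu\neq\lambda}(M-\mu I)/\prod_{\mu\neq\lambda}(\lambda-\mu)$, whose coefficients lie in $\mathbb{Q}(\lambda)$; applying any Galois automorphism $\sigma$ of the splitting field fixes the real matrix $M$ and the vector $\be_u$ while sending $E_\lambda\be_u$ to $E_{\sigma(\lambda)}\be_u$, so $E_\lambda\be_u=\mathbf{0}$ iff $E_{\sigma(\lambda)}\be_u=\mathbf{0}$.

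With Galois-stability in hand, I split into cases. If every $\lambda_j\in\sigma_u(M)$ is rational, then as algebraic integers they lie in $\mathbb{Z}$, giving (i). Otherwise, picking a nonrational $\lambda_j=(a_j+c_j\sqrt{\Delta_j})/2$ with $\Delta_j>1$ square-free, its Galois conjugate $\lambda_j'=(a_j-c_j\sqrt{\Delta_j})/2$ lies in $\sigma_u(M)$, and applying the ratio condition to the pairs $\lambda_j-\lambda_j'$ and $\lambda_k-\lambda_k'$ for any two nonrational eigenvalues forces $\sqrt{\Delta_j}/\sqrt{\Delta_k}\in\mathbb{Q}$, hence $\Delta_j=\Delta_k=:\Delta$. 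A second application of the ratio condition to $(\lambda_j-\lambda_k)/(\lambda_j-\lambda_k')$ with $\sqrt{\Delta}\notin\mathbb{Q}$ forces the rational parts $a_j,a_k$ to coincide, and comparing any rational $\lambda_m\in\sigma_u(M)$ against such a pair pins $\lambda_m=a/2$. Hence every eigenvalue in $\sigma_u(M)$ writes as $(a+c_j\sqrt{\Delta})/2$ with a common $a$, and the standard description of the ring of integers of $\mathbb{Q}(\sqrt{\Delta})$ forces the $c_j$'s to share a common parity, making each difference $\lambda_j-\lambda_k=(c_j-c_k)\sqrt{\Delta}/2$ an integer multiple of $\sqrt{\Delta}$. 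I expect the main obstacle to be carrying out the ratio-condition bookkeeping cleanly enough to identify the constant $a$ across both the rational and nonrational eigenvalues simultaneously.
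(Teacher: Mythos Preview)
The paper does not prove this theorem; it is stated as a known result of Coutinho and Godsil (Corollary~7.3.1 and Theorem~7.6.1 of \cite{Coutinho2021}), so there is no in-paper argument to compare against. Assessing your sketch directly: your treatment of the equivalence (1)$\Leftrightarrow$(2) via the spectral decomposition is correct and standard.

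The substantive gap is in the arithmetic refinement. When you write ``picking a nonrational $\lambda_j=(a_j+c_j\sqrt{\Delta_j})/2$,'' you are assuming from the outset that any irrational eigenvalue in $\sigma_u(M)$ is a \emph{quadratic} irrational. Nothing you have established guarantees this: the eigenvalues are algebraic integers of a priori arbitrary degree, and the nontrivial content of the result is precisely that the ratio condition together with Galois-closure of $\sigma_u(M)$ forces the support into a single quadratic field. Your closing remark that the main obstacle is ``identifying the constant $a$'' confirms you have not spotted this step. One clean way to fill it: set $\beta=\lambda_1-\lambda_2$ and note that each automorphism $\sigma$ of the splitting field sends $\beta$ to $c_\sigma\beta$ with $c_\sigma\in\mathbb{Q}$ (by Galois-closure plus the ratio condition); the map $\sigma\mapsto c_\sigma$ is then a homomorphism from a finite group into $\mathbb{Q}^\times$, so its image lies in $\{\pm1\}$, whence $\beta^2\in\mathbb{Q}$. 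If $\beta\in\mathbb{Q}$, averaging over the Galois orbit of $\lambda_1$ gives $\lambda_1\in\mathbb{Q}$ and hence case~(i); otherwise $\beta\in\mathbb{Q}\sqrt{\Delta}$, every $\lambda_j\in\lambda_1+\mathbb{Q}\sqrt{\Delta}$, and summing $\sigma(\lambda_1)-\lambda_1\in\mathbb{Q}\sqrt{\Delta}$ over $\sigma$ shows $\lambda_1\in\mathbb{Q}(\sqrt{\Delta})$, after which your ring-of-integers bookkeeping finishes correctly. (A minor secondary point: your Galois-closure argument for $\sigma_u(M)$ tacitly assumes $M$ has rational entries, which is how the cited source frames it and is strictly stronger than $\phi(M,t)\in\mathbb{Z}[x]$.)
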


For periodicity, PST and PGST between twins, we refer the reader to \cite{Monterde2022}.


\section{Fractional revival}\label{secFR}

Since proper $(\alpha,\beta)$-FR occurs from $u$ to $v$ if and only if proper $\left(-\frac{\bar{\alpha}\beta}{\bar{\beta}},\beta\right)$-FR occurs from $v$ to $u$ \cite[Proposition 4.1]{Chan2019}, we may say FR between $u$ and $v$ in place of FR from $u$ to $v$. In this section, we characterize proper FR between twins. The following result is a consequence of \cite[Proposition 4.2]{Chan2019}, Lemma \ref{aut}, and \cite[Lemma 4.1]{Godsil2017}.

\begin{lemma}
\label{frpar}
If proper fractional revival occurs between twin vertices $u$ and $v$ in $X$, then they are strongly cospectral.
\end{lemma}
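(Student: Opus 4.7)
The plan is to assemble the claim from three ingredients, each already supplied by the cited results. First, I would invoke \cite[Proposition 4.2]{Chan2019}, which asserts that if proper fractional revival occurs between $u$ and $v$ at some time $\tau$, then $u$ and $v$ are \emph{parallel} with respect to $M$: for every spectral idempotent $E_j$ of $M$ there is a scalar $c_j$ such that $E_j\be_u = c_j E_j\be_v$. Although \cite{Chan2019} is phrased for adjacency dynamics on simple graphs, the proof uses only that $M$ is real symmetric and that the walk is $U(\tau)=\sum_j e^{i\tau\lambda_j}E_j$, so it applies verbatim in the weighted setting here and for $M\in\{A,L,Q\}$.

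Next, I would bring in Lemma \ref{aut}: twin vertices are cospectral, meaning $(E_j)_{u,u}=(E_j)_{v,v}$ for every $j$. Because each $E_j$ is a real symmetric projection, one has $(E_j)_{u,u}=\|E_j\be_u\|^2$ and $(E_j)_{v,v}=\|E_j\be_v\|^2$, so cospectrality is equivalent to the norm identity $\|E_j\be_u\|^2=\|E_j\be_v\|^2$ for all $j$. Since $M$ is real symmetric, each $E_j$ has real entries and the $c_j$ above must lie in $\mathbb{R}$ whenever $E_j\be_v\neq\mathbf{0}$.

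The final step is to combine the two: substituting $E_j\be_u=c_j E_j\be_v$ into the norm identity gives $c_j^2=1$ whenever $E_j\be_v\neq\mathbf{0}$, hence $c_j\in\{+1,-1\}$; and whenever $E_j\be_v=\mathbf{0}$, parallelism forces $E_j\be_u=\mathbf{0}=E_j\be_v$ as well. In either case $E_j\be_u=\pm E_j\be_v$, which is precisely strong cospectrality. The role of \cite[Lemma 4.1]{Godsil2017} is, as I read it, to package this ``parallel plus cospectral implies strongly cospectral'' implication, so the citation just short-circuits the little norm computation above.

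The only real obstacle I foresee is confirming that \cite[Proposition 4.2]{Chan2019} transports to the Laplacian and signless Laplacian settings and to weighted graphs with loops; a one-line remark noting that its proof is purely spectral should suffice. The $c_j\in\mathbb{R}$ issue is handled automatically once one observes that the $E_j$ are real, so no additional use of the involution from Lemma \ref{aut} is strictly needed beyond its corollary of cospectrality.
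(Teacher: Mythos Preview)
Your proposal is correct and follows exactly the route the paper takes: the paper's proof is simply the sentence ``a consequence of \cite[Proposition 4.2]{Chan2019}, Lemma \ref{aut}, and \cite[Lemma 4.1]{Godsil2017},'' and you have faithfully unpacked those three citations (proper FR $\Rightarrow$ parallel; twins $\Rightarrow$ cospectral; parallel $+$ cospectral $\Rightarrow$ strongly cospectral). Your remark that the argument in \cite{Chan2019} is purely spectral and hence carries over to $M\in\{A,L,Q\}$ and to weighted graphs is a welcome clarification the paper leaves implicit.
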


To characterize proper FR between twins, it suffices to consider a set of twins of size two by Lemma \ref{frpar} and Corollary \ref{nopstpgsttw}.

\begin{theorem}
\label{frtwinschar}
Let $T=\{u,v\}$ be a set of twins in $X$ with $\sigma_u(M)=\{\theta,\lambda_1,\ldots,\lambda_r\}$, where $\theta$ is given in (\ref{adjalpha}). If proper $(e^{i\zeta}\cos\gamma,ie^{i\zeta}\sin\gamma)$-fractional revival occurs between $u$ and $v$ at time $\tau$, then
\begin{enumerate}
\item $u$ and $v$ are strongly cospectral with $\sigma_{uv}^+(M)=\{\lambda_1,\ldots,\lambda_r\}$ and $\sigma_{uv}^-(M)=\{\theta\}$, and
\item for any $j$ and $k$, $\tau(\lambda_j-\lambda_k)\equiv 0$ (mod $2\pi$)
and $\tau(\lambda_j-\theta)\equiv 2\gamma$ (mod $2\pi$).
\end{enumerate}
Moreover, the converse holds if and only if $\gamma$ is not an integer multiple of $\pi$.
\end{theorem}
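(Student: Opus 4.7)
The plan is to translate the fractional revival identity $U(\tau)\mathbf e_u = e^{i\zeta}\cos\gamma\,\mathbf e_u + ie^{i\zeta}\sin\gamma\,\mathbf e_v$ into the spectral language provided by (\ref{specdecM}), then exploit the structural information that Lemma \ref{frpar} and Lemma \ref{strcospchar} give about the eigenvalue support of $u$.

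For the forward direction, I would first invoke Lemma \ref{frpar} to conclude that $u$ and $v$ are strongly cospectral; Lemma \ref{strcospchar} then forces $\sigma_{uv}^-(M)=\{\theta\}$, so $\sigma_{uv}^+(M)=\{\lambda_1,\ldots,\lambda_r\}$, which is (1). For (2), I would expand the left-hand side as $U(\tau)\mathbf e_u=\sum_{j} e^{i\tau\lambda_j}E_j\mathbf e_u$ and rewrite the right-hand side using $\mathbf e_u=\sum_j E_j\mathbf e_u$ together with $\mathbf e_v=\sum_{\lambda_j\in\sigma_{uv}^+(M)}E_j\mathbf e_u-E_\theta\mathbf e_u$, obtaining
\[
e^{i\zeta}\cos\gamma\,\mathbf e_u+ie^{i\zeta}\sin\gamma\,\mathbf e_v \;=\; e^{i(\zeta+\gamma)}\sum_{\lambda_j\in\sigma_{uv}^+(M)} E_j\mathbf e_u \;+\; e^{i(\zeta-\gamma)} E_\theta\mathbf e_u.
\]
Because the vectors $\{E_j\mathbf e_u:\lambda_j\in\sigma_u(M)\}$ lie in pairwise orthogonal eigenspaces and are nonzero by definition of the support, they are linearly independent. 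Matching coefficients then yields $e^{i\tau\lambda_j}=e^{i(\zeta+\gamma)}$ for every $\lambda_j\in\sigma_{uv}^+(M)$ and $e^{i\tau\theta}=e^{i(\zeta-\gamma)}$, from which the two congruences in (2) follow by subtraction in the exponent.

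For the converse, I would first observe that $\mathbf e_u-\mathbf e_v$ is an eigenvector for $\theta$ by Lemma \ref{alphabeta}, and combine this with $E_\theta\mathbf e_u=-E_\theta\mathbf e_v$ to get $E_\theta\mathbf e_u=\tfrac12(\mathbf e_u-\mathbf e_v)$, whence $\sum_{\lambda_j\in\sigma_{uv}^+(M)}E_j\mathbf e_u=\tfrac12(\mathbf e_u+\mathbf e_v)$. The congruences in (2) provide real numbers $\mu,\nu$ with $\tau\lambda_j\equiv\mu\pmod{2\pi}$ for every $\lambda_j\in\sigma_{uv}^+(M)$, $\tau\theta\equiv\nu\pmod{2\pi}$, and $\mu-\nu\equiv 2\gamma\pmod{2\pi}$. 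Substituting into the spectral decomposition of $U(\tau)\mathbf e_u$ gives
\[
U(\tau)\mathbf e_u \;=\; \frac{e^{i\mu}+e^{i\nu}}{2}\,\mathbf e_u \;+\; \frac{e^{i\mu}-e^{i\nu}}{2}\,\mathbf e_v,
\]
and factoring out $e^{i\zeta}$ with $\zeta=(\mu+\nu)/2$ and applying Euler's formulas recovers exactly the claimed $(e^{i\zeta}\cos\gamma, ie^{i\zeta}\sin\gamma)$-revival expression. The coefficient of $\mathbf e_v$ is nonzero, and hence the revival is \emph{proper}, precisely when $\sin\gamma\neq 0$, i.e., when $\gamma$ is not an integer multiple of $\pi$; conversely, if $\gamma\in\pi\mathbb Z$ one only recovers periodicity at $u$, not proper fractional revival, which pins down the stated "if and only if."

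The only delicate step is the linear-independence argument separating the equations indexed by $\sigma_{uv}^+(M)$ from the one indexed by $\theta$; once that is handled cleanly, the rest is algebraic manipulation via Euler's identities, and the edge case $\gamma\in\pi\mathbb Z$ falls out by inspecting when the $\mathbf e_v$-coefficient vanishes.
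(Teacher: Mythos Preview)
Your proposal is correct and follows essentially the same approach as the paper. Both arguments invoke Lemma~\ref{frpar} and Lemma~\ref{strcospchar} for part~(1), then compare the spectral expansion of $U(\tau)\mathbf e_u$ with the decomposition of $\alpha\mathbf e_u+\beta\mathbf e_v$ in terms of the $E_j\mathbf e_u$'s to obtain $e^{i\tau\lambda_j}=e^{i(\zeta+\gamma)}$ and $e^{i\tau\theta}=e^{i(\zeta-\gamma)}$; the paper reaches these via $\alpha\pm\beta=e^{i(\zeta\pm\gamma)}$ while you write out the right-hand side directly, and your treatment of the converse is simply a more explicit version of what the paper calls ``straightforward.''
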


\begin{proof}
Lemmas \ref{strcospchar} and \ref{frpar} prove (1). To prove (2), let $u$ and $v$ be strongly cospectral twins and $\theta,\lambda_1,\ldots,\lambda_r$ be the eigenvalues in $\sigma_u(M)$ with spectral idempotents $E_{\theta},E_1,\ldots,E_r$, respectively, where $\theta$ is given in (\ref{adjalpha}). If $(\alpha,\beta)$-FR occurs between $u$ and $v$, then there is a time $\tau$ and $\alpha,\beta\in\mathbb{C}$ with $|\alpha|^2+|\beta|^2=1$ such that 
\begin{equation}
\label{frtweq}
U(\tau)\be_u=\alpha \be_u+\beta \be_v.
\end{equation}
From Lemma \ref{strcospchar}, $\sigma_{uv}^-(M)=\{\theta\}$, and so using the fact that the spectral idempotents sum to identity, we get
\begin{equation}
\label{fr2}
\begin{split}
\alpha \be_u+\beta \be_v&=\sum_{\lambda\in\sigma_u(M)}(\alpha E_{\theta}\be_u+\beta E_{\theta}\be_v)=(\alpha-\beta) E_{\theta}\be_u+ (\alpha+\beta) \sum_{j=1}^rE_{j}\be_u.
\end{split}
\end{equation}
Combining (\ref{specdecM}), (\ref{frtweq}) and (\ref{fr2}) then gives us
\begin{equation}
\label{fr3}
\alpha-\beta=e^{i\tau\theta}\quad \text{and}\quad \alpha+\beta=e^{i\tau\lambda_j}
\end{equation}
for each $j$. Solving for $\alpha$ and $\beta$, we obtain $\alpha=e^{i\zeta}\cos\gamma$ and $\beta=ie^{i\zeta}\sin\gamma$, where $\gamma,\zeta\in\mathbb{R}$ are given by
\begin{equation}
\label{angle}
\gamma\equiv \tau(\lambda_1-\theta)/2\ (\text{mod}\ \pi)\quad \text{and}\quad \zeta\equiv \tau(\lambda_1+\theta)/2\ (\text{mod}\ \pi).
\end{equation}
As $\alpha-\beta=e^{i(\zeta-\gamma)}$ and $\alpha+\beta=e^{i(\zeta+\gamma)}$, we can rewrite (\ref{fr3}) as
\begin{equation}
\label{fr4}
e^{i(\zeta-\gamma)}=e^{i\tau\theta}\quad \text{and}\quad e^{i(\zeta+\gamma)}=e^{i\tau\lambda_j}.
\end{equation}
Now, (\ref{fr4}) implies that $e^{i\tau(\lambda_j-\theta)}=e^{i2\gamma}$ for each $j$. Thus, (\ref{fr4}) holds if and only if the two equations in (2) hold. The last statement is straightforward.
\end{proof}

The FR in Theorem \ref{frtwinschar} is balanced if and only if $\gamma$ is an odd multiple of $\frac{\pi}{4}$. Next, we determine necessary conditions for proper FR to occur between twins. In what follows, for strongly cospectral twins $u$ and $v$, we let $\sigma_{uv}^+(M)=\{\lambda_1,\ldots,\lambda_r\}$ and $\sigma_{uv}^-(M)=\{\theta\}$, where $r\geq 2$, $\lambda_1>\lambda_2$ and $\theta$ is given in (\ref{adjalpha}).

\begin{corollary}
\label{frtwinschar1}
If $u$ and $v$ are twins in $X$ that admit proper $(e^{i\zeta}\cos\gamma,ie^{i\zeta}\sin\gamma)$-fractional revival at time $\tau$, then $\sigma_{uv}^+(M)$ satisfies the ratio condition. In particular, if $p_j$ and $q_j$ are integers such that $\frac{\lambda_1-\lambda_j}{\lambda_1-\lambda_2}=\frac{p_j}{q_j}$ and $\operatorname{gcd}\{p_j,q_j\}=1$, then
\begin{equation*}
\tau=\frac{2\pi qk}{\lambda_1-\lambda_2}\quad \text{and}\quad \gamma\equiv qk\left(\frac{\lambda_1-\theta}{\lambda_1-\lambda_2}\right)\pi \ (\text{mod}\ \pi)
\end{equation*}
for some integer $k$, where $q=\operatorname{lcm}(q_2,\ldots,q_n)$ and $q\left(\frac{\lambda_1-\theta}{\lambda_1-\lambda_2}\right)$ is not an integer. 
\end{corollary}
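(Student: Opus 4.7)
The plan is to deduce everything directly from part (2) of Theorem \ref{frtwinschar}, which already tells us that $\tau(\lambda_j-\lambda_k) \equiv 0 \pmod{2\pi}$ for all $\lambda_j,\lambda_k \in \sigma_{uv}^+(M)$ and $\tau(\lambda_1-\theta) \equiv 2\gamma \pmod{2\pi}$. The first congruence says that every ratio $\frac{\lambda_j-\lambda_k}{\lambda_p-\lambda_q}$ (with $\lambda_p \neq \lambda_q$) is a ratio of two integers, so the ratio condition for $\sigma_{uv}^+(M)$ is immediate.

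Next, I would extract the claimed form of $\tau$. Writing $\tau(\lambda_1-\lambda_j) = 2\pi k_j$ for integers $k_j$ (with $k_1 = 0$) and invoking $\frac{\lambda_1-\lambda_j}{\lambda_1-\lambda_2} = \frac{p_j}{q_j}$ with $\gcd(p_j,q_j)=1$, we get $k_j = \frac{p_j}{q_j} k_2$. Since $k_j$ must be an integer and $\gcd(p_j,q_j)=1$, this forces $q_j \mid k_2$ for every $j$, and hence $q \mid k_2$. Writing $k_2 = qk$ then yields $\tau = \frac{2\pi k_2}{\lambda_1-\lambda_2} = \frac{2\pi q k}{\lambda_1-\lambda_2}$.

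For $\gamma$, I would use $\tau(\lambda_1-\theta) \equiv 2\gamma \pmod{2\pi}$, i.e.\ $\gamma \equiv \frac{\tau(\lambda_1-\theta)}{2} \pmod{\pi}$. Substituting the expression for $\tau$ gives $\gamma \equiv qk\left(\frac{\lambda_1-\theta}{\lambda_1-\lambda_2}\right)\pi \pmod{\pi}$, as claimed. Finally, to establish that $q\left(\frac{\lambda_1-\theta}{\lambda_1-\lambda_2}\right)$ is not an integer, I would argue by contradiction: if this quantity were an integer, then $qk\left(\frac{\lambda_1-\theta}{\lambda_1-\lambda_2}\right)$ is an integer for every $k$, hence $\gamma \equiv 0 \pmod{\pi}$, and so $\beta = ie^{i\zeta}\sin\gamma = 0$, contradicting the properness of the fractional revival.

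There is essentially no serious obstacle here — the argument is bookkeeping on top of Theorem \ref{frtwinschar}. The only step that requires slight care is the divisibility deduction $q \mid k_2$, which relies on the coprimality $\gcd(p_j,q_j)=1$ to pass from $q_j \mid p_j k_2$ to $q_j \mid k_2$ for each $j$, and then on the definition of $\operatorname{lcm}$ to combine these into $q \mid k_2$.
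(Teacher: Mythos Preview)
Your proof is correct and follows essentially the same approach as the paper's: both deduce everything from the congruences in Theorem~\ref{frtwinschar}(2). The only difference is that the paper outsources the ratio condition to \cite[Corollary~5.3]{Chan2019} and the determination of the minimum time $\frac{2\pi q}{\lambda_1-\lambda_2}$ to the proof of \cite[Theorem~4.2]{Monterde2022}, whereas you derive both facts directly and elementarily from the congruences themselves---which is arguably cleaner and makes the argument self-contained.
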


\begin{proof}
Let $u$ and $v$ be twins that admit proper FR at time $\tau$. By Theorem \ref{frtwinschar}, $u$ and $v$ are strongly cospectral with $\sigma_{uv}^+(M)=\{\lambda_1,\ldots,\lambda_r\}$ and $\sigma_{uv}^-(M)=\{\theta\}$. The fact that $\sigma_{uv}^+(M)$ satisfies the ratio condition follows directly from  \cite[ Corollary 5.3]{Chan2019}. Now, the equation $\alpha+\beta=e^{it\lambda_j}$ in (\ref{fr3}) holds if and only if $e^{it(\lambda_1-\lambda_j)}=1$ for all $j\in\{2,\ldots,r\}$. Replacing $\sigma_u(M)$ by $\sigma_{uv}^+(M)$ in the proof of Theorem 4.2 in \cite{Monterde2022}, we get that the minimum time $t$ that satisfies $e^{it(\lambda_1-\lambda_j)}=1$ is $t=\frac{2\pi q}{\lambda_1-\lambda_2}$. Thus, the time $\tau$ at which FR occurs between $u$ and $v$ must be an integer multiple of $t$. That is, $\tau=\frac{2\pi qk}{\lambda_1-\lambda_2}$ for some integer $k$. From (\ref{angle}), $\gamma\equiv \frac{1}{2}\tau(\lambda_1-\theta)\ (\text{mod}\ \pi)\equiv qk(\frac{\lambda_1-\theta}{\lambda_1-\lambda_2})\pi \ (\text{mod}\ \pi)$. As proper FR occurs, $\gamma$ is not an integer multiple of $\pi$. Equivalently, $qk(\frac{\lambda_1-\theta}{\lambda_1-\lambda_2})$ is not an integer, which implies that $q(\frac{\lambda_1-\theta}{\lambda_1-\lambda_2})$ is not an integer.
\end{proof}

Observe that the time $\tau$ in Corollary \ref{frtwinschar1} at which proper FR occurs only depends on the eigenvalues in $\sigma_{uv}^+(M)$. For strongly cospectral vertices, this only happens when the vertices are twins. Indeed, if two vertices are strongly cospectral but are not twins, then \cite[Corollary 3.17(1)]{MonterdeELA} yields $|\sigma_{uv}^-(M)|\geq 2$, and \cite[Corollary 5.6]{Chan2019} implies that $\tau$ depends on the eigenvalues in both $\sigma_{uv}^+(M)$ and $\sigma_{uv}^-(M)$.

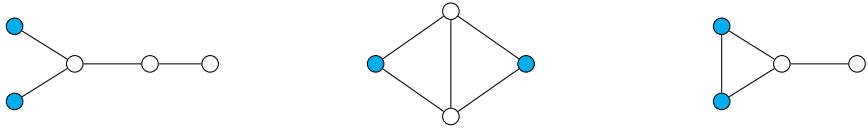
\begin{figure}[h!]
	\begin{center}
		\begin{tikzpicture}
		\tikzset{enclosed/.style={draw, circle, inner sep=0pt, minimum size=.22cm}}
	   
		\node[enclosed, fill=cyan, label={left, yshift=0cm:}] (v_1) at (0,0.5) {};
		\node[enclosed, fill=cyan, label={left, yshift=0cm:}] (v_2) at (0,-0.5) {};
		\node[enclosed] (v_3) at (0.8,0) {};
		\node[enclosed] (v_4) at (1.8,0) {};
		\node[enclosed] (v_5) at (2.6,0) {};
		
		\draw (v_1) --  (v_3);
		\draw (v_2) --  (v_3);
		\draw (v_3) --  (v_4);
		\draw (v_4) --  (v_5);
		
		\node[enclosed, fill=cyan, label={left, yshift=0cm:}] (w_1) at (4.8,0) {};
		\node[enclosed, fill=cyan, label={left, yshift=0cm:}] (w_2) at (6.8,0) {};
		\node[enclosed] (w_3) at (5.8,0.7) {};
		\node[enclosed] (w_4) at (5.8,-0.7) {};
		
		\draw (w_1) --  (w_3);
		\draw (w_1) --  (w_4);
		\draw (w_2) --  (w_3);
		\draw (w_2) --  (w_4);
		\draw (w_3) --  (w_4);
		
		\node[enclosed, fill=cyan, label={left, yshift=0cm:}] (u_1) at (9.4,0.5) {};
		\node[enclosed, fill=cyan, label={left, yshift=0cm:}] (u_2) at (9.4,-0.5) {};
		\node[enclosed] (u_3) at (10.2,0) {};
		\node[enclosed] (u_4) at (11.2,0) {};
		
		\draw (u_1) --  (u_3);
		\draw (u_1) --  (u_2);
		\draw (u_2) --  (u_3);
		\draw (u_3) --  (u_4);
		\end{tikzpicture}
	\end{center}
	\caption{The graphs $X$ (left), $Y$ (center) and $Z$ (right) with twin vertices $u$ and $v$ marked blue}\label{fig}
\end{figure}

We illustrate Corollary \ref{frtwinschar1} with the following example.

\begin{example}
\label{pgst}
Consider the simple unweighted graph $X$ in Fig \ref{fig}. Then $u$ and $v$ are adjacency strongly cospectral with $\sigma_{uv}^+(A)=\left\{\pm\sqrt{2\pm\sqrt{2}}\right\}$ and $\sigma_{uv}^-(A)=\{0\}$. Since $\sigma_{uv}^+(A)$ does not satisfy the ratio condition, proper adjacency FR does not occur between $u$ and $v$ by Corollary \ref{frtwinschar1}. But since all eigenvalues in $\sigma_{uv}^+(A)$ are linearly independent over $\mathbb{Q}$, one checks that adjacency PGST occurs between $u$ and $v$.
\end{example}

The following result not only provides a converse of the first statement in Corollary \ref{frtwinschar1}, but also determines all times $\tau$ and angles $\gamma$ such that $(e^{i\zeta}\cos\gamma,ie^{i\zeta}\sin\gamma)$-FR occurs between twins at time $\tau$.

\begin{corollary}
\label{frtwinschar2}
Let $u$ and $v$ be strongly cospectral twins in $X$ with $\sigma_{uv}^+(M)$ satisfying the ratio condition. Consider $q$ defined in Corollary \ref{frtwinschar1}, and for every positive integer $k$, let
\begin{equation*}
\tau_k=\frac{2\pi qk}{\lambda_1-\lambda_2},\quad \zeta_k\equiv\frac{1}{2}\tau_k(\lambda_1+\theta) \ (\text{mod}\ \pi), \quad  \text{and} \quad \gamma_k\equiv \frac{1}{2}\tau_k(\lambda_1-\theta) \ (\text{mod}\ \pi).
\end{equation*}
Then for every positive integer $k$, $(e^{i\zeta_k}\cos\gamma_k,ie^{i\zeta_k}\sin\gamma_k)$-fractional revival occurs between $u$ and $v$ at time $\tau_k$, which is proper if and only if $qk\left(\frac{\lambda_1-\theta}{\lambda_1-\lambda_2}\right)$ is not an integer. Moreover, $\{\tau_k:k\ \text{is an integer}\}$ and $\{\gamma_k:k\ \text{is an integer}\}$ are the sets of all times $\tau$ and angles $\gamma$ respectively such that $(e^{i\zeta}\cos\gamma,ie^{i\zeta}\sin\gamma)$-fractional revival occurs between $u$ and $v$ at time $\tau$.
\end{corollary}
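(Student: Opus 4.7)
The plan is to use Theorem~\ref{frtwinschar} as the backbone, translating its two modular conditions into the explicit formulas of the corollary via the ratio condition, and then computing $U(\tau_k)\be_u$ directly to avoid any reliance on the converse half of Theorem~\ref{frtwinschar} in the degenerate periodicity case.

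First I would set $s=\tau(\lambda_1-\lambda_2)/(2\pi)$ and, writing $\frac{\lambda_1-\lambda_j}{\lambda_1-\lambda_2}=\frac{p_j}{q_j}$ in lowest terms, recast the first condition of Theorem~\ref{frtwinschar}(2), namely $\tau(\lambda_1-\lambda_j)\equiv 0\pmod{2\pi}$ for $j=2,\ldots,r$, as $sp_j/q_j\in\mathbb{Z}$. Since $p_2=q_2=1$, the case $j=2$ forces $s\in\mathbb{Z}$ outright, after which $\gcd(p_j,q_j)=1$ forces $q_j\mid s$ for every $j$, hence $q\mid s$ and $\tau=\tau_k=2\pi qk/(\lambda_1-\lambda_2)$ for some integer $k$; conversely every such $\tau_k$ satisfies $\tau_k(\lambda_1-\lambda_j)\in 2\pi\mathbb{Z}$. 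This pins down the admissible time set exactly.

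For these $\tau_k$ I would compute $U(\tau_k)\be_u$ directly from \eqref{specdecM}, using $E_\theta\be_u=\tfrac12(\be_u-\be_v)$ (a consequence of $\sigma_{uv}^-(M)=\{\theta\}$) and $e^{i\tau_k\lambda_j}=e^{i\tau_k\lambda_1}$ for all $j\geq 2$, to obtain
\[
U(\tau_k)\be_u=\tfrac12\bigl(e^{i\tau_k\lambda_1}+e^{i\tau_k\theta}\bigr)\be_u+\tfrac12\bigl(e^{i\tau_k\lambda_1}-e^{i\tau_k\theta}\bigr)\be_v.
\]
A standard half-angle rewrite recasts these coefficients as $e^{i\zeta_k}\cos\gamma_k$ and $ie^{i\zeta_k}\sin\gamma_k$, with $\zeta_k$ and $\gamma_k$ exactly as in \eqref{angle} evaluated at $\tau_k$. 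This simultaneously verifies $(e^{i\zeta_k}\cos\gamma_k,ie^{i\zeta_k}\sin\gamma_k)$-FR at every $\tau_k$ and correctly handles the degenerate case $\gamma_k\equiv 0\pmod\pi$, which the formula reveals as periodicity $U(\tau_k)\be_u=\pm e^{i\zeta_k}\be_u$.

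The properness criterion then reads off instantly: $ie^{i\zeta_k}\sin\gamma_k\neq 0$ iff $\gamma_k\not\equiv 0\pmod\pi$ iff $qk\frac{\lambda_1-\theta}{\lambda_1-\lambda_2}\notin\mathbb{Z}$, and uniqueness of the $\gamma_k$ follows from the collapsed equation $\tau_k(\lambda_1-\theta)\equiv 2\gamma\pmod{2\pi}$, giving the two "moreover" assertions. I do not anticipate any real obstacle: the only bookkeeping worth flagging is that $p_2=q_2=1$, which is what makes $s$ integer without a separate rationality argument and keeps the LCM step clean.
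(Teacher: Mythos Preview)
Your proposal is correct and follows essentially the same route as the paper: identify the admissible times via the condition $e^{i\tau(\lambda_1-\lambda_j)}=1$ (the paper cites this from the proof of Corollary~\ref{frtwinschar1}, whereas you rederive the $\operatorname{lcm}$ argument inline via $p_2=q_2=1$), then verify that $(\ref{fr4})$ holds at each $\tau_k$ to conclude FR with the stated parameters. Your direct computation of $U(\tau_k)\be_u$ is just the backward direction of $(\ref{fr3})$--$(\ref{frtweq})$ written out explicitly, and your separate handling of the degenerate $\gamma_k\equiv 0$ case is a nice bit of hygiene but not a different idea.
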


\begin{proof}
If $\sigma_{uv}^+(M)$ satisfies the ratio condition, then as mentioned in the proof of Corollary \ref{frtwinschar1}, the minimum $t$ such that $e^{it(\lambda_1-\lambda_j)}=1$ for each $j$ is $t=\frac{2\pi q}{\lambda_1-\lambda_2}$. Now, the $\tau_k$, $\zeta_k$ and $\gamma_k$ above satisfies $e^{i(\zeta_k+\gamma_k)}=e^{i\tau_k\lambda_j}$ for all $j\in\{1,\ldots,r\}$ and $e^{i(\zeta_k-\gamma_k)}=e^{i\tau_k\theta}$. Thus, (\ref{fr4}) holds, and so $(e^{i\zeta_k}\cos\gamma_k,ie^{i\zeta_k}\sin\gamma_k)$-FR occurs between $u$ and $v$ at time $\tau_k$. As $\gamma_k\equiv qk(\frac{\lambda_1-\theta}{\lambda_1-\lambda_2})\pi$, $\sin\gamma_k=0$ if and only if $qk(\frac{\lambda_1-\theta}{\lambda_1-\lambda_2})\notin\mathbb{Z}$. As $\gamma_k$ depends on $\tau_k$, and Corollary \ref{frtwinschar1} implies that the time at which FR occurs is an integer multiple of $\tau_1$.
\end{proof}

Thus, the minimum FR time is $\tau_1=\frac{2\pi q}{\lambda_1-\lambda_2}$. If we add that $\phi(M,t)\in\mathbb{Z}[x]$, then $\tau_1=\frac{2\pi}{g\sqrt{\Delta}}$ (see Proposition \ref{q}). Since $\gamma_k=k \gamma_1$, we also note that if $u$ and $v$ are periodic at $\tau_1$, then proper FR does not occur between them, while if $u$ and $v$ admit PST at $\tau_1$, then balanced FR does not occur between them.

Corollaries \ref{frtwinschar1} and \ref{frtwinschar2} altogether yield another characterization of FR between twin vertices.

\begin{theorem}
\label{anotherchar}
Let $u$ and $v$ be twins in $X$. Then proper fractional revival occurs between $u$ and $v$ if and only if these vertices are strongly cospectral, $\sigma_{uv}^+(M)$ satisfies the ratio condition, and $q\left(\frac{\lambda_1-\theta}{\lambda_1-\lambda_2}\right)$ is not an integer for some $\lambda_1,\lambda_2\in \sigma_{uv}^+(M)$, where $q$ is defined in Corollary \ref{frtwinschar1}.
\end{theorem}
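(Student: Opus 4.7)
The plan is to derive Theorem \ref{anotherchar} as a direct consequence of Corollaries \ref{frtwinschar1} and \ref{frtwinschar2}, which together already pin down both the necessary and sufficient conditions for proper fractional revival between twins.

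For the forward implication, I would assume that proper fractional revival occurs between $u$ and $v$ at some time $\tau$. Applying Theorem \ref{frtwinschar}(1) (or Lemma \ref{frpar}) gives strong cospectrality, with $\sigma_{uv}^-(M)=\{\theta\}$ for $\theta$ as in (\ref{adjalpha}). Corollary \ref{frtwinschar1} then supplies the ratio condition on $\sigma_{uv}^+(M)$ together with the parametrization $\tau=\frac{2\pi qk}{\lambda_1-\lambda_2}$ for some nonzero integer $k$, and the fact that $qk\left(\frac{\lambda_1-\theta}{\lambda_1-\lambda_2}\right)\notin\mathbb{Z}$. The remaining observation is the purely arithmetic one: if $q\left(\frac{\lambda_1-\theta}{\lambda_1-\lambda_2}\right)$ were an integer, then so would $qk\left(\frac{\lambda_1-\theta}{\lambda_1-\lambda_2}\right)$ for every integer $k$, contradicting properness. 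Hence the third listed condition holds.

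For the reverse implication, I would start from the three hypotheses: strong cospectrality of $u,v$, the ratio condition on $\sigma_{uv}^+(M)$, and the non-integrality of $q\left(\frac{\lambda_1-\theta}{\lambda_1-\lambda_2}\right)$. Corollary \ref{frtwinschar2} then applies directly: for every positive integer $k$, there is fractional revival at time $\tau_k=\frac{2\pi qk}{\lambda_1-\lambda_2}$ with angle $\gamma_k\equiv qk\left(\frac{\lambda_1-\theta}{\lambda_1-\lambda_2}\right)\pi \pmod{\pi}$. Setting $k=1$, the non-integrality hypothesis forces $\sin\gamma_1\neq 0$, so the fractional revival is proper.

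Since the two directions are essentially bookkeeping on top of the established corollaries, I do not anticipate a genuine obstacle; the only subtle point is making clear that the third condition is equivalent (via the ``for some $k$'' versus ``for some $\lambda_1,\lambda_2$'' phrasing) to the non-properness escape clause already encoded in Corollaries \ref{frtwinschar1} and \ref{frtwinschar2}. For that reason, the whole argument can be presented as a short two-paragraph deduction, with no new computations beyond those already carried out in the proofs of Theorem \ref{frtwinschar}, Corollary \ref{frtwinschar1}, and Corollary \ref{frtwinschar2}.
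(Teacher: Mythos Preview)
Your proposal is correct and follows essentially the same approach as the paper, which simply states that Theorem~\ref{anotherchar} is obtained by combining Corollaries~\ref{frtwinschar1} and~\ref{frtwinschar2}. Your write-up in fact supplies more detail than the paper does, making explicit the arithmetic observation that non-integrality of $q\left(\frac{\lambda_1-\theta}{\lambda_1-\lambda_2}\right)$ is exactly what is needed for the $k=1$ instance of Corollary~\ref{frtwinschar2} to yield proper (rather than trivial) fractional revival.
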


We remark that the above characterization holds for any real symmetric $H$ as long as $\theta$ is an eigenvalue for $H$ with eigenvector $\textbf{e}_u-\textbf{e}_v$.

\section{Periodicity, PGST and PST from fractional revival}\label{secFR1}

The following result, which is a combination of Corollaries 5.8 and 5.11 in \cite{Chan2019}, reveals that fractional revival between strongly cospectral vertices implies either periodicity, PST or PGST.

\begin{lemma}
\label{pstpgstfr}
Let $u$ and $v$ be strongly cospectral vertices with $(e^{i\zeta}\cos\gamma,ie^{i\zeta}\sin\gamma)$-fractional revival at $\tau$.
\begin{enumerate}
\item If $\gamma=\frac{a}{b}\pi$ for some integers $a$ and $b$ with $\operatorname{gcd}(a,b)=1$, then $u$ and $v$ are periodic at time $b\tau$. If we add that $b$ is even, then perfect state transfer occurs between $u$ and $v$ at time $b\tau/2$.
\item If $\gamma=c\pi$ for some irrational number $c$, then pretty good state transfer occurs between $u$ and $v$.
\end{enumerate}
\end{lemma}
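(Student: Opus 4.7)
The plan is to show that, because $u$ and $v$ are strongly cospectral, the FR hypothesis forces $U(\tau)$ to leave the two-dimensional subspace $W=\operatorname{span}\{\be_u,\be_v\}$ invariant, and to act on $W$ as the matrix $e^{i\zeta}\bigl(\begin{smallmatrix}\cos\gamma & i\sin\gamma\\ i\sin\gamma & \cos\gamma\end{smallmatrix}\bigr)$. The subsequent work is then purely a question about iterating this $2\times 2$ block.

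First, I would mimic the phase-counting step from the proof of Theorem \ref{frtwinschar}, but applied to a general strongly cospectral pair. Writing $P_\pm=\sum_{\lambda_j\in\sigma_{uv}^\pm(M)}E_j$, strong cospectrality gives $\be_u=P_+\be_u+P_-\be_u$ and $\be_v=P_+\be_u-P_-\be_u$. Matching the spectral expansion of $U(\tau)\be_u$ with $e^{i\zeta}\cos\gamma\,\be_u+ie^{i\zeta}\sin\gamma\,\be_v$ yields
\begin{equation*}
e^{i\tau\lambda_j}=e^{i(\zeta+\gamma)}\ \text{for}\ \lambda_j\in\sigma_{uv}^+(M),\qquad e^{i\tau\lambda_j}=e^{i(\zeta-\gamma)}\ \text{for}\ \lambda_j\in\sigma_{uv}^-(M).
\end{equation*}
Hence $\be_u+\be_v=2P_+\be_u$ and $\be_u-\be_v=2P_-\be_u$ are eigenvectors of $U(\tau)$ with eigenvalues $e^{i(\zeta+\gamma)}$ and $e^{i(\zeta-\gamma)}$. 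Iterating, for every positive integer $m$,
\begin{equation*}
U(m\tau)\be_u=e^{im\zeta}\bigl[\cos(m\gamma)\,\be_u+i\sin(m\gamma)\,\be_v\bigr].
\end{equation*}

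For part (1), set $m=b$ with $\gamma=\tfrac{a}{b}\pi$. Then $b\gamma=a\pi$, so $\sin(b\gamma)=0$ and $\cos(b\gamma)=(-1)^a$, giving $U(b\tau)\be_u=(-1)^a e^{ib\zeta}\be_u$ and hence periodicity at $b\tau$. When $b$ is even, $\gcd(a,b)=1$ forces $a$ odd, so $(b/2)\gamma=\tfrac{a\pi}{2}$ satisfies $\cos((b/2)\gamma)=0$ and $|\sin((b/2)\gamma)|=1$; substituting $m=b/2$ yields $U(b\tau/2)\be_u=i^ae^{ib\zeta/2}\be_v$, which is PST.

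For part (2), with $\gamma=c\pi$ and $c$ irrational, I want to drive $|\sin(n\gamma)|$ arbitrarily close to $1$, i.e.\ $nc$ arbitrarily close to $\tfrac12$ modulo $1$. This is immediate from Weyl's equidistribution (or Kronecker's density) theorem: the sequence $\{nc\bmod 1\}_{n\in\mathbb{Z}^+}$ is dense in $[0,1)$. Picking $n_k$ with $n_kc\to\tfrac12\pmod 1$ gives times $\tau_k=n_k\tau\to\infty$ with $|U(\tau_k)_{u,v}|\to 1$, which is PGST between $u$ and $v$. The main conceptual step is the first one — verifying that the FR identity forces the eigenvalues in $\sigma_u(M)$ to collapse onto the two phases $e^{i(\zeta\pm\gamma)}$, so that all higher-time behaviour is governed by the single commuting $2\times2$ block; once that is established, parts (1) and (2) are routine trigonometry plus a density argument.
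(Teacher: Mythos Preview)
Your argument is correct. The paper itself does not prove this lemma: it merely records that the result is ``a combination of Corollaries~5.8 and~5.11 in \cite{Chan2019}.'' What you have written is essentially a self-contained reconstruction of that cited argument. The key step---using strong cospectrality to match the spectral expansion of $U(\tau)\be_u$ against $e^{i\zeta}\cos\gamma\,\be_u+ie^{i\zeta}\sin\gamma\,\be_v$ and thereby pin the phases $e^{i\tau\lambda_j}$ to $e^{i(\zeta\pm\gamma)}$---is exactly the mechanism the paper uses in its proof of Theorem~\ref{frtwinschar} for the twin case, and it extends verbatim to any strongly cospectral pair. Once the iteration formula $U(m\tau)\be_u=e^{im\zeta}\bigl[\cos(m\gamma)\be_u+i\sin(m\gamma)\be_v\bigr]$ is in hand, your treatment of part~(1) is routine, and your appeal to Weyl/Kronecker for part~(2) is the standard density argument. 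So the difference from the paper is only that you supply a proof where the paper defers to the literature; the underlying mathematics is the same.
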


Recall that balanced $(e^{i\zeta}\cos\gamma,ie^{i\zeta}\sin\gamma)$-FR occurs between $u$ and $v$ if and only if $\gamma=\frac{a\pi}{4}$ for some odd $a$. 
Combining this Lemma \ref{pstpgstfr}(1) yields the following result due to Chan et al.\ \cite[Cor 5.9]{Chan2020}.

\begin{corollary}
\label{pstpgstfrlem}
Let $u$ and $v$ be strongly cospectral vertices. If balanced fractional revival occurs between $u$ and $v$ with minimum time $\tau$, then perfect state transfer occurs between $u$ and $v$ with minimum time $2\tau$.
\end{corollary}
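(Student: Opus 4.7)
The plan is to use Lemma \ref{pstpgstfr}(1) to obtain the existence of PST at $2\tau$ essentially for free, and then to argue minimality by a short computation in the cyclic group of FR times.

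Existence of PST at $2\tau$ is immediate: by the characterization of balanced FR recalled just before the corollary, $\gamma = a\pi/4$ for some odd integer $a$, hence $\gcd(a,4)=1$ and $b=4$ is even, so Lemma \ref{pstpgstfr}(1) gives perfect state transfer between $u$ and $v$ at time $b\tau/2 = 2\tau$.

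For minimality, assume toward contradiction that PST occurs between $u$ and $v$ at some time $\tau'$ with $0<\tau'<2\tau$. Both $\tau$ and $\tau'$ are FR times for the strongly cospectral pair $u,v$; the hypothesis that balanced FR occurs forces each of $\sigma_{uv}^+(M)$ and $\sigma_{uv}^-(M)$ to satisfy the ratio condition with rationally related minimum periods (cf.\ \cite[Corollary~5.3]{Chan2019}), so the set $T$ of FR times between $u$ and $v$ is a cyclic subgroup $\tau_0\mathbb{Z}$ of $\mathbb{R}$. This is the analogue of the discrete structure made explicit in Corollary \ref{frtwinschar2} for the twin case. Because $t\in T$ satisfies $t(\lambda-\mu)\equiv 2\gamma_t \pmod{2\pi}$ for any $\lambda\in\sigma_{uv}^+(M)$ and $\mu\in\sigma_{uv}^-(M)$, the angle is additive: $\gamma_{k\tau_0}\equiv k\gamma_0 \pmod{\pi}$. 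Writing $\tau=m\tau_0$ and $\tau'=n\tau_0$, balanced FR at $\tau$ and PST at $\tau'$ give $m\gamma_0\equiv \pm\pi/4 \pmod\pi$ and $n\gamma_0\equiv \pi/2\pmod\pi$; subtracting yields $(n-m)\gamma_0\equiv \pm\pi/4\pmod\pi$, so balanced FR occurs at time $|n-m|\tau_0$. Since $\pi/4\not\equiv\pi/2\pmod\pi$ we have $n\neq m$, and $n<2m$ then forces $0<|n-m|<m$, producing a balanced FR time strictly less than $\tau$ — contradicting the minimality of $\tau$.

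The main obstacle lies entirely in the minimality half: the existence claim is a one-line application of Lemma \ref{pstpgstfr}(1), whereas for minimality one has to set up the cyclic group description of FR times outside the twin setting and then execute the short modular computation $(n-m)\gamma_0\equiv\pm\pi/4\pmod\pi$. Neither step is deep, but the additive structure on $\gamma_t$ must be invoked explicitly; everything else is bookkeeping.
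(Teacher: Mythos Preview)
Your proof is correct. The existence half is exactly what the paper does: it notes that balanced FR forces $\gamma=a\pi/4$ with $a$ odd, so $b=4$ in Lemma~\ref{pstpgstfr}(1) and PST follows at $b\tau/2=2\tau$. The paper stops there and simply attributes the full statement (including minimality of $2\tau$) to Chan et al.\ \cite[Cor.~5.9]{Chan2020}, giving no further argument.

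You go further and actually supply a self-contained minimality proof. Your use of the additive structure $\gamma_{k\tau_0}\equiv k\gamma_0\pmod\pi$ on the cyclic group of FR times is sound: once proper FR exists, both $\sigma_{uv}^+(M)$ and $\sigma_{uv}^-(M)$ satisfy the ratio condition, the two associated period groups have rationally related generators (since a common nonzero FR time exists), and the intersection is $\tau_0\mathbb{Z}$. The subtraction step $(n-m)\gamma_0\equiv\pm\pi/4\pmod\pi$ then produces a strictly smaller balanced FR time, contradicting minimality of $\tau$. This is a clean and elementary argument that the paper omits; it is essentially the content behind the cited result of Chan et al., specialized here without the twin hypothesis.
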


We now apply Lemma \ref{pstpgstfr} to twins. We say that $u$ and $v$ exhibit \textit{proper pretty good state transfer} if these vertices exhibit pretty good state transfer but not periodicity.

\begin{theorem}
\label{pstpgstfr1}
Let $u$ and $v$ be strongly cospectral twins with fractional revival at time $\tau$.
\begin{enumerate}
\item If $\frac{\lambda_1-\theta}{\lambda_1-\lambda_2}=\frac{a}{b}$ for some integers $a$ and $b$ with $\operatorname{gcd}(a,b)=1$, then $u$ and $v$ are periodic at time $b\tau$. If $b$ is even, then perfect state transfer occurs between $u$ and $v$ at time $b\tau/2$.
\item If $\frac{\lambda_1-\theta}{\lambda_1-\lambda_2}\notin\mathbb{Q}$, then proper pretty good state transfer occurs between $u$ and $v$. Moreover, proper $(e^{i\zeta_k}\cos\gamma_k,ie^{i\zeta_k}\sin\gamma_k)$-fractional revival occurs between $u$ and $v$ at time $\tau_k$ for every integer $k$, where $\tau_k$, $\zeta_k$ and $\gamma_k$ are defined in Corollary \ref{frtwinschar2}(1).
\end{enumerate}
\end{theorem}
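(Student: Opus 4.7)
The plan is to reduce Theorem \ref{pstpgstfr1} to a direct application of Lemma \ref{pstpgstfr} by expressing the fractional revival angle $\gamma$ in terms of the twin parameters. By Corollary \ref{frtwinschar2}, any FR time $\tau$ has the form $\tau_k = 2\pi qk/(\lambda_1-\lambda_2)$ for some integer $k$, whence
\[
\gamma \equiv \tfrac{1}{2}\tau(\lambda_1-\theta) \equiv \pi qk \cdot \frac{\lambda_1-\theta}{\lambda_1-\lambda_2} \pmod{\pi}.
\]
This single identity drives both parts of the proof.

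For Part~(1), substituting $(\lambda_1-\theta)/(\lambda_1-\lambda_2) = a/b$ makes $\gamma$ a rational multiple of $\pi$ whose denominator divides $b$, and Lemma \ref{pstpgstfr}(1) then yields periodicity at $b\tau$ and, when $b$ is even, PST at $b\tau/2$. I would also corroborate this via a direct spectral computation: strong cospectrality together with Lemma \ref{alphabeta} gives $E_\theta \be_u = \tfrac12(\be_u-\be_v)$ and $\sum_{j=1}^{r} E_j\be_u = \tfrac12(\be_u+\be_v)$, and combined with $\tau(\lambda_j-\lambda_k) \equiv 0 \pmod{2\pi}$ from Theorem \ref{frtwinschar}(2), I would derive
\[
U(t)\be_u = \tfrac12 e^{it\theta}\bigl[(1+e^{it(\lambda_1-\theta)})\be_u + (e^{it(\lambda_1-\theta)}-1)\be_v\bigr]
\]
at $t = b\tau$ and, when $b$ is even, at $t = b\tau/2$. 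Since $b\tau(\lambda_1-\theta) = a\tau(\lambda_1-\lambda_2) \equiv 0 \pmod{2\pi}$, the $\be_v$-coefficient vanishes at $t = b\tau$, giving periodicity. At $t = b\tau/2$ the exponent becomes $\pi aqk$; with $b$ even and $\gcd(a,b)=1$ forcing $a$ odd, this is an odd multiple of $\pi$ for the appropriate $k$, killing the $\be_u$-coefficient and yielding PST.

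For Part~(2), irrationality of $(\lambda_1-\theta)/(\lambda_1-\lambda_2)$ makes $\gamma/\pi$ irrational, so Lemma \ref{pstpgstfr}(2) immediately gives PGST between $u$ and $v$. To upgrade to \emph{proper} PGST, I would apply Theorem \ref{ratiocon}: strong cospectrality places $\theta, \lambda_1, \lambda_2$ in $\sigma_u(M)$, and the irrational ratio violates the ratio condition on $\sigma_u(M)$, so $u$ cannot be periodic. The final claim is a direct consequence of Corollary \ref{frtwinschar2}: for every nonzero integer $k$, $qk(\lambda_1-\theta)/(\lambda_1-\lambda_2)$ is irrational and therefore not an integer, so the FR at $\tau_k$ is proper.

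The main obstacle is a bookkeeping subtlety in Part~(1): the fraction $qka/b$ need not be in lowest terms, so the denominator of $\gamma/\pi$ that Lemma \ref{pstpgstfr}(1) perceives can be a proper divisor of $b$. This is innocuous for periodicity, since any period dividing $b\tau$ implies periodicity at $b\tau$, and the direct parity computation above pins down the PST time at $b\tau/2$ without having to first reduce the fraction.
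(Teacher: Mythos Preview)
Your approach is essentially the paper's: write $\gamma \equiv \pi qk(\lambda_1-\theta)/(\lambda_1-\lambda_2) \pmod{\pi}$ via Corollary~\ref{frtwinschar2} and feed it into Lemma~\ref{pstpgstfr} in each case. The extra direct spectral computation, the explicit flagging of the lowest-terms subtlety in Part~(1), and the appeal to Theorem~\ref{ratiocon} for non-periodicity in Part~(2) are minor elaborations not present in the paper's short proof, which instead just observes that no $\gamma_\ell$ can be an integer multiple of $\pi$.
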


\begin{proof}
From Corollary \ref{frtwinschar2}(1), we know that $\tau=\tau_k$ and $\gamma=\gamma_k$ for some integer $k$, and so $\gamma\equiv qk\left(\frac{\lambda_1-\theta}{\lambda_1-\lambda_2}\right)\pi$ (mod $\pi$). Thus, if $\frac{\lambda_1-\theta}{\lambda_1-\lambda_2}\in\mathbb{Q}$, then $\gamma$ is a rational multiple of $\pi$, and hence, Lemma \ref{pstpgstfr}(1) applies.
Otherwise, Lemma \ref{pstpgstfr}(2) applies. Moreover, if $\frac{\lambda_1-\theta}{\lambda_1-\lambda_2}\notin\mathbb{Q}$, then $\gamma_{\ell}$ is not a rational multiple of $\pi$ for any integer $\ell$, and so $u$ and $v$ are not periodic at $\tau_{\ell}$ for any integer $\ell$. Applying Corollary \ref{frtwinschar2}(1) completes the proof.
\end{proof}

Suppose proper FR occurs between twins $u$ and $v$. To determine whether periodicity or proper PGST occurs between them, we simply apply Lemma \ref{pstpgstfr} if $\gamma$ is known. However, if $\gamma$ is unknown, but the elements in $\sigma_{uv}^+(M)$ and $\sigma_{uv}^-(M)$ are known, then we may use Theorem \ref{pstpgstfr1}. We also remark that the converses of Theorem \ref{pstpgstfr1} statements (1) and (2) do not hold. That is, twin vertices that admit PGST need not admit proper FR as illustrated by Example \ref{pgst}, and periodic strongly cospectral twin vertices need not exhibit proper FR as illustrated by Example \ref{pgst1}. The latter remark motivates us to characterize periodic strongly cospectral twin vertices that admit proper FR. For an integer $b$, denote the largest power of two that divides $b$ by $\nu_2(b)$. Wealso  write $b\not\divides q$ to denote the fact that $b$ does not divide $q$.

\begin{theorem}
\label{long}
Let $u$ and $v$ be periodic twin vertices that are strongly cospectral. The following hold
\begin{enumerate}
\item Let $a$ and $b$ be integers such that $\frac{\lambda_1-\theta}{\lambda_1-\lambda_2}=\frac{a}{b}$ and $\operatorname{gcd}(a,b)=1$. Proper fractional revival occurs between $u$ and $v$ at time $\tau$ if and only if $b\not\divides q$, where $q$ is defined in Corollary \ref{frtwinschar1}.
\item Suppose $b\not\divides q$ and let $g=\operatorname{gcd}(b,q)$. Then $\{\tau_k:k\ \text{is an integer such that}\ \frac{b}{g}\not\divides k\}$ is the set of all times such that proper fractional revival occurs between $u$ and $v$. The following also hold.
\begin{enumerate}
\item If $\nu_2(q)\geq \nu_2(b)$, then neither perfect state transfer nor balanced fractional revival occurs between $u$ and $v$.
\item If $\nu_2(b)\geq \nu_2(q)+1$, then perfect state transfer occurs between $u$ and $v$ with minimum time $\frac{b\tau_1}{2g}$ and balanced fractional revival occurs between $u$ and $v$ if and only if $\nu_2(b)\geq \nu_2(q)+2$.
\end{enumerate}
\end{enumerate}
\end{theorem}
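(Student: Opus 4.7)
The plan is to invoke Corollary \ref{frtwinschar2}, which applies because strongly cospectral periodic twins have $\sigma_u(M)$ satisfying the ratio condition (Theorem \ref{ratiocon}), hence so does $\sigma_{uv}^+(M)$. This parameterizes every FR time as $\tau_k$ with corresponding angle $\gamma_k \equiv qk \cdot \frac{\lambda_1-\theta}{\lambda_1-\lambda_2}\pi \pmod{\pi}$. The strategy is then to translate the properness, PST, and balanced-FR conditions into arithmetic conditions on the integers $k$, $q$, and $b$.

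For part (1), substituting $\frac{\lambda_1-\theta}{\lambda_1-\lambda_2} = \frac{a}{b}$ with $\gcd(a,b)=1$ into the properness clause of Corollary \ref{frtwinschar2} shows that $\tau_k$ gives proper FR if and only if $b \nmid qka$, which by coprimality is $b \nmid qk$. Such a $k$ exists if and only if $b \nmid q$, proving (1).

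For part (2), set $g = \gcd(b,q)$, write $b = gb'$ and $q = gq'$ with $\gcd(b',q')=1$. Then $b \nmid qk$ reduces to $b' \nmid q'k$, and by coprimality of $b'$ and $q'$, to $\frac{b}{g} \nmid k$, giving the claimed set of proper-FR times. Reducing modulo $\pi$ yields $\gamma_k = \frac{s_k}{b'}\pi$, where $s_k = aq'k \bmod b'$ and $0 < s_k < b'$ precisely when $\frac{b}{g} \nmid k$. Note that $\gcd(aq',b')=1$, so as $k$ varies over $\{1,\ldots,b'-1\}$, the residue $s_k$ ranges over all of $\{1,\ldots,b'-1\}$.

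For subparts (2a)\,and\,(2b), observe that PST at $\tau_k$ is equivalent to $\gamma_k \equiv \frac{\pi}{2} \pmod{\pi}$, i.e., $s_k = b'/2$, which requires $2 \mid b'$; and balanced FR at $\tau_k$ is equivalent to $\gamma_k$ being an odd multiple of $\frac{\pi}{4}$ modulo $\pi$, i.e., $s_k \in \{b'/4, 3b'/4\}$, which requires $4 \mid b'$. Since $\nu_2(g) = \min(\nu_2(b), \nu_2(q))$, we get $\nu_2(b') = \nu_2(b) - \min(\nu_2(b), \nu_2(q))$. In case (2a), $\nu_2(q) \geq \nu_2(b)$ gives $\nu_2(b') = 0$, so $b'$ is odd and neither PST nor balanced FR occurs. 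In case (2b), $\nu_2(b) \geq \nu_2(q)+1$ gives $\nu_2(b') \geq 1$, so $b'$ is even: solving $b' \mid 2k$ yields minimum PST time at $k = b'/2$, giving $\tau_{b'/2} = \frac{b'\tau_1}{2} = \frac{b\tau_1}{2g}$; and the condition $4 \mid b'$ is precisely $\nu_2(b') \geq 2$, i.e., $\nu_2(b) \geq \nu_2(q)+2$.

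The main obstacle is the bookkeeping for balanced FR: one must ensure that $s_k$ really attains $b'/4$ or $3b'/4$ when $4 \mid b'$ (this uses invertibility of $aq'$ modulo $b'$) and verify that no earlier time in the family $\{\tau_k\}$ produces PST or balanced FR, so that the stated minimum PST time and the stated equivalences are both tight.
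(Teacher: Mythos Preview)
Your proposal is correct and follows essentially the same approach as the paper: both invoke Corollary~\ref{frtwinschar2} to parameterize all FR times as $\tau_k$ with $\gamma_k = k\gamma_1 = k\bigl(\tfrac{qa}{b}\bigr)\pi$, and then reduce the properness, PST, and balanced-FR conditions to divisibility statements about $k$, $q$, and $b$. Your introduction of the reduced quantities $b'=b/g$, $q'=q/g$, and the residue $s_k$ makes the $2$-adic bookkeeping a bit more explicit than the paper's proof (which simply asserts, for instance, that under $\nu_2(q)\ge\nu_2(b)$ no multiple of $\gamma_1$ is an odd multiple of $\tfrac{\pi}{4}$ or $\tfrac{\pi}{2}$, and under $\nu_2(b)\ge\nu_2(q)+1$ directly exhibits $k=\tfrac{b}{2g}$ and $k=\tfrac{b}{4g}$), but the substance is the same.
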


\begin{proof}
By Theorem \ref{anotherchar}, the above assumption implies that proper FR occurs between $u$ and $v$ if and only if $q\left(\frac{\lambda_1-\theta}{\lambda_1-\lambda_2}\right)\notin\mathbb{Z}$. As $u$ is periodic, Theorem \ref{ratiocon} implies that $\sigma_u(M)$ satisfies the ratio condition, and so $\frac{\lambda_1-\theta}{\lambda_1-\lambda_2}=\frac{a}{b}$ for some integers $a$ and $b$ with $\operatorname{gcd}(a,b)=1$. Thus, $q\left(\frac{\lambda_1-\theta}{\lambda_1-\lambda_2}\right)=\frac{qa}{b}$ is not an integer if and only if $b\not\divides q$. This proves (1). Applying Corollary \ref{frtwinschar2}(1) proves (2), and we note that the $\gamma_k$'s in Corollary \ref{frtwinschar2}(1) satisfy $\gamma_k=k\gamma_1=k\left(\frac{qa}{b}\right)\pi$. Now, suppose $\nu_2(q)\geq \nu_2(b)$. Then any integer multiple of $\gamma_1$ is not an odd multiple of either $\frac{\pi}{4}$ or $\frac{\pi}{2}$, and so neither PST nor balanced FR can occur between $u$ and $v$. This proves (2a). Finally, suppose $\nu_2(b)\geq \nu_2(q)+1$. If $k=\frac{b}{2g}$, then $\gamma_k=\frac{qa}{2g}\pi$ is an odd multiple of $\frac{\pi}{2}$, and so the FR at time $\tau_k$ is PST, and $\tau_k$ is the earliest time that PST occurs because $\gamma_k$ is the smallest positive odd multiple of $\frac{\pi}{2}$. If $\nu_2(b)=\nu_2(q)+1$, $\gamma_{\ell}$ is not an odd multiple of $\frac{\pi}{4}$ for all $\ell$, and so balanced FR does occur between $u$ and $v$. However, if $\nu_2(b)\geq \nu_2(q)+2$, then we could choose $k=\frac{b}{4g}$ so that $\gamma_k=\frac{qa}{4g}\pi$ is an odd multiple of $\frac{\pi}{4}$, so that balanced FR occurs between $u$ and $v$. This proves (2b).
\end{proof}

From Corollary \ref{pstpgstfrlem}, we know that the existence of balanced FR (at time $\tau$) automatically implies the existence of PST (at time $2\tau$). However, the converse of this is not true by virtue of Theorem \ref{long}(2b).

\section{When $\phi(M,t)$ has integer coefficients}\label{secCharPoly}

We now investigate what happens to the results from the previous section if we add that $\phi(M,t)$ has integer coefficients. The following result is an analog of Corollary \ref{frtwinschar1}.

\begin{corollary}
\label{cor1}
Let $\phi(M,t)\in\mathbb{Z}[x]$, and suppose $u$ and $v$ are twins in $X$ that admit fractional revival. Then $\theta$ is an integer and either (i) all elements in $\sigma_{uv}^+(M)$ are integers or (ii) there is a square-free integer $\Delta>1$ and an integer $a$ such that $\lambda_j=\frac{1}{2}(a+c_j\sqrt{\Delta})$ for some integer $c_j$ with the same parity as $a$.
\end{corollary}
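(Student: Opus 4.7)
The plan is to apply Corollary \ref{frtwinschar1} to extract the ratio condition on $\sigma_{uv}^+(M)$, then run the standard Galois-theoretic / ring-of-integers argument underlying the second half of Theorem \ref{ratiocon}, and finally argue $\theta\in\mathbb{Z}$ using that $\be_u-\be_v$ is a rational eigenvector.

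First I would invoke Corollary \ref{frtwinschar1} (together with Lemma \ref{frpar}): since fractional revival occurs, $u$ and $v$ are strongly cospectral twins with $\sigma_{uv}^+(M)=\{\lambda_1,\ldots,\lambda_r\}$ satisfying the ratio condition; and because $\phi(M,t)\in\mathbb{Z}[x]$, every $\lambda_j$ and also $\theta$ is an algebraic integer.

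Next I would set $K=\mathbb{Q}(\lambda_1-\lambda_2)$, so that $[K:\mathbb{Q}]\leq 2$ and the ratio condition places every $\lambda_j$ in $K$. If $K=\mathbb{Q}$, each $\lambda_j$ is a rational algebraic integer and case (i) follows. Otherwise $K=\mathbb{Q}(\sqrt{\Delta})$ for a unique square-free integer $\Delta>1$. Writing $\lambda_j=a_j+b_j\sqrt{\Delta}$ with $a_j,b_j\in\mathbb{Q}$, Galois closure of $\sigma_{uv}^+(M)$ (which follows from Galois equivariance of the spectral projectors applied to the rational vectors $\be_u\pm\be_v$) combined with the ratio condition forces the rational parts $a_j$ to coincide, say to $a/2$: indeed, for $\lambda\in\sigma_{uv}^+(M)$ its Galois conjugate $\bar{\lambda}$ also lies in $\sigma_{uv}^+(M)$, and requiring $(\lambda-\lambda')/(\lambda-\bar{\lambda})\in\mathbb{Q}$ rules out unequal rational parts. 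The condition $\lambda_j\in\mathcal{O}_K$ then forces $a$ and each $c_j=2b_j$ to be integers of matching parity (automatic when $\Delta\not\equiv 1\pmod{4}$, a genuine constraint when $\Delta\equiv 1\pmod{4}$). This gives case (ii).

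Finally I would show $\theta\in\mathbb{Z}$. Since $\be_u-\be_v$ is a rational eigenvector of $M$ with eigenvalue $\theta$, reading the $u$-th coordinate of $M(\be_u-\be_v)=\theta(\be_u-\be_v)$ recovers $\theta=M_{u,u}-M_{u,v}$, consistent with (\ref{adjalpha}); a short Galois-equivariance argument applied to the rational vector $\be_u-\be_v$ shows that $\theta$ is Galois-fixed, hence $\theta\in\mathbb{Q}$, and being a rational algebraic integer forces $\theta\in\mathbb{Z}$. I expect this last step to be the main obstacle: the ratio condition controls the structure of $\sigma_{uv}^+(M)$ but is silent about $\theta$, so one must lean on the special twin eigenvector $\be_u-\be_v$ from Lemma \ref{alphabeta} together with $\phi(M,t)\in\mathbb{Z}[x]$ to close the argument.
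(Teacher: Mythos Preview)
Your proposal is correct and follows essentially the same route as the paper: invoke Corollary~\ref{frtwinschar1} to obtain the ratio condition on $\sigma_{uv}^+(M)$, feed this into the algebraic-integer argument behind Theorem~\ref{ratiocon} (which the paper simply cites as \cite[Theorem~7.6.1]{Coutinho2021}) to get the dichotomy (i)/(ii), and conclude $\theta\in\mathbb{Z}$ from its being a rational algebraic integer. The only difference is granularity: you unpack the Galois-closure and ring-of-integers reasoning that the paper compresses into a citation, and you supply an explicit justification for the rationality of $\theta$ where the paper's one-line ``As $\theta$ is an algebraic integer, it is an integer'' leaves that step implicit.
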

\begin{proof}
As $\theta$ is an algebraic integer, it is an integer. Moreover, Corollary \ref{frtwinschar1} implies that $\sigma_{uv}^+(M)$ satisfies the ratio condition, which is equivalent to (i) and (ii) by virtue of \cite[Theorem 7.6.1]{Coutinho2021}.
\end{proof}

We also revisit the parameters $q$ in Corollary \ref{frtwinschar1} and $\tau_k$ in Corollary \ref{frtwinschar2}.

\begin{proposition}
\label{q}
Let $\phi(M,t)\in\mathbb{Z}[x]$. Then $q=\frac{\lambda_1-\lambda_2}{g\sqrt{\Delta}}$ and $\tau_k=\frac{2\pi k}{g\sqrt{\Delta}}$, where $g=\operatorname{gcd}\left(\frac{\lambda_1-\lambda_2}{\sqrt{\Delta}},\ldots, \frac{\lambda_1-\lambda_r}{\sqrt{\Delta}}\right)$, and either $\Delta=1$ if $\sigma_u(M)\subseteq \mathbb{Z}$ or $\Delta>1$ is a square-free integer otherwise.
\end{proposition}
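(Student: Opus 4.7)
The plan is to translate Corollary \ref{frtwinschar1}'s definition of $q=\operatorname{lcm}(q_2,\ldots,q_r)$ into the arithmetic data provided by the integrality hypothesis $\phi(M,t)\in\mathbb{Z}[x]$. First I would invoke Theorem \ref{ratiocon} (equivalently Corollary \ref{cor1}) applied to $\sigma_{uv}^+(M)$, which satisfies the ratio condition because $q$ makes sense. This gives that each $\lambda_j\in\sigma_{uv}^+(M)$ has the form $\lambda_j=\tfrac{1}{2}(a+c_j\sqrt{\Delta})$ with $c_j\in\mathbb{Z}$ of the same parity as $a$, for some square-free $\Delta\geq 1$ (with $\Delta=1$ precisely when $\sigma_u(M)\subseteq\mathbb{Z}$). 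As an immediate consequence, $d_j\defeq \frac{\lambda_1-\lambda_j}{\sqrt{\Delta}}=\frac{c_1-c_j}{2}$ is an integer (this is clear in the integer case, and in the non-integer case $c_1-c_j$ is even because $c_1,c_j$ share the parity of $a$).

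Next I would set $g=\gcd(d_2,\ldots,d_r)$ and $\tilde d_j=d_j/g$, so that $\gcd(\tilde d_2,\ldots,\tilde d_r)=1$. Then
\begin{equation*}
\frac{\lambda_1-\lambda_j}{\lambda_1-\lambda_2}=\frac{d_j}{d_2}=\frac{\tilde d_j}{\tilde d_2},
\end{equation*}
so writing this fraction in lowest terms as $p_j/q_j$ yields $q_j=\tilde d_2/\gcd(\tilde d_j,\tilde d_2)$. Each $q_j$ therefore divides $\tilde d_2$, so $\operatorname{lcm}(q_2,\ldots,q_r)\mid \tilde d_2$.

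The crux of the argument is the reverse divisibility, which I expect to be the main (mild) obstacle. For any prime power $p^e$ exactly dividing $\tilde d_2$, the coprimality $\gcd(\tilde d_2,\ldots,\tilde d_r)=1$ supplies some index $j$ with $p\nmid \tilde d_j$; for that $j$, $\gcd(\tilde d_j,\tilde d_2)$ is coprime to $p$, whence $p^e\mid q_j$. Running over all prime powers dividing $\tilde d_2$ shows $\tilde d_2\mid \operatorname{lcm}(q_2,\ldots,q_r)$, so the two quantities coincide. Thus
\begin{equation*}
q=\operatorname{lcm}(q_2,\ldots,q_r)=\tilde d_2=\frac{d_2}{g}=\frac{\lambda_1-\lambda_2}{g\sqrt{\Delta}}.
\end{equation*}

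Finally, substituting this expression for $q$ into the formula $\tau_k=\frac{2\pi qk}{\lambda_1-\lambda_2}$ from Corollary \ref{frtwinschar2} gives $\tau_k=\frac{2\pi k}{g\sqrt{\Delta}}$, completing the proof. The only step requiring care beyond bookkeeping is the lcm/gcd identity above; everything else is a direct consequence of Theorem \ref{ratiocon} together with the algebraic integrality of the eigenvalues.
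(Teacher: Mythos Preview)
Your proposal is correct and follows essentially the same route as the paper: both arguments use the integrality hypothesis to write $d_j=(\lambda_1-\lambda_j)/\sqrt{\Delta}\in\mathbb{Z}$, identify $q_j=d_2/\gcd(d_2,d_j)$, and then establish $\operatorname{lcm}(q_2,\ldots,q_r)=d_2/g$. The only cosmetic difference is that the paper quotes the standard identity $\operatorname{lcm}\!\left(\tfrac{d_2}{h_2},\ldots,\tfrac{d_2}{h_r}\right)=\tfrac{d_2}{\gcd(h_2,\ldots,h_r)}$ (with $h_j=\gcd(d_2,d_j)$) in one line, whereas you verify it explicitly via a prime-by-prime argument; the substance is the same.
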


\begin{proof}
As $\phi(M,t)\in\mathbb{Z}[x]$, $\sigma_{uv}^+(M)$ satisfying the ratio condition is equivalent to Corollary \ref{cor1}. Thus, the $q_j$'s in Corollary \ref{frtwinschar1} satisfy $q_j=\frac{\lambda_1-\lambda_2}{h_j\sqrt{\Delta}}$, where $h_j=\operatorname{gcd}(\frac{\lambda_1-\lambda_2}{\sqrt{\Delta}},\frac{\lambda_1-\lambda_j}{\sqrt{\Delta}})$, $\Delta=1$ if $\sigma_u(M)\subseteq \mathbb{Z}$ and $\Delta>1$ is square-free otherwise. Hence, $q=\operatorname{lcm}(\frac{\lambda_1-\lambda_2}{h_2\sqrt{\Delta}},\ldots,\frac{\lambda_1-\lambda_2}{h_r\sqrt{\Delta'}})=\frac{\lambda_1-\lambda_2}{g\sqrt{\Delta}}$, where $g=\operatorname{gcd}(h_2,\ldots,h_r)=\operatorname{gcd}(\frac{\lambda_1-\lambda_2}{\sqrt{\Delta}},\ldots, \frac{\lambda_1-\lambda_r}{\sqrt{\Delta}})$, which in turn yields $\tau_k=\frac{2\pi qk}{\lambda_1-\lambda_2}=\frac{2\pi k}{g\sqrt{\Delta}}$.
\end{proof}

Next, we have the following version of Theorem \ref{anotherchar} whenever $\phi(M,t)\in\mathbb{Z}[x]$.

\begin{theorem}
\label{anothercharA}
Let $\phi(M,t)\in\mathbb{Z}[x]$, and suppose $u$ and $v$ are twins in $X$. Proper fractional revival occurs between $u$ and $v$ if and only if they are strongly cospectral and one of the following conditions hold.
\begin{enumerate}
\item (i) All elements in $\sigma_{uv}^+(M)$ have the form $\frac{1}{2}(2\theta+b_j\sqrt{\Delta})$, where $b_j$ is even and either $\Delta=1$ or $\Delta>1$ is square-free, and (ii) $g\not\divides \frac{\lambda_1-\theta}{\sqrt{\Delta}}$, where $g$ is given in Proposition \ref{q}.
\item All elements in $\sigma_{uv}^+(M)$ have form $\frac{1}{2}(a+b_j\sqrt{\Delta})$, where $a\neq 2\theta$, $b_j$ is even and $\Delta>1$ is square-free.
\end{enumerate}
\end{theorem}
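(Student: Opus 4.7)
The plan is to derive this theorem as a refinement of Theorem \ref{anotherchar} under the integer characteristic polynomial hypothesis, using Corollary \ref{cor1} to rewrite the ratio condition and Proposition \ref{q} to simplify the nondivisibility condition. By Theorem \ref{anotherchar}, proper fractional revival between twins $u$ and $v$ is equivalent to $u$ and $v$ being strongly cospectral, $\sigma_{uv}^+(M)$ satisfying the ratio condition, and $q\bigl(\tfrac{\lambda_1-\theta}{\lambda_1-\lambda_2}\bigr) \notin \mathbb{Z}$. Strong cospectrality is carried over verbatim into the statement, so I would focus on translating the remaining two conditions.

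First, I would invoke Corollary \ref{cor1}: under $\phi(M,t)\in\mathbb{Z}[x]$, the ratio condition on $\sigma_{uv}^+(M)$ forces the eigenvalues to be either all integers (setting $\Delta=1$ and writing each $\lambda_j = \theta + \tfrac{b_j}{2}$ with $b_j$ even) or all of the form $\tfrac{1}{2}(a + b_j\sqrt{\Delta})$ with $b_j \equiv a \pmod 2$ and $\Delta>1$ square-free. Next, by Proposition \ref{q}, $q = \tfrac{\lambda_1-\lambda_2}{g\sqrt{\Delta}}$, so
$$q\cdot\frac{\lambda_1-\theta}{\lambda_1-\lambda_2} \;=\; \frac{\lambda_1-\theta}{g\sqrt{\Delta}},$$
and the required nondivisibility reduces to $\tfrac{\lambda_1-\theta}{g\sqrt{\Delta}}\notin\mathbb{Z}$.

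I would then split into two cases according to whether $a = 2\theta$. If $a=2\theta$, then $b_j$ is even and $\lambda_1-\theta = \tfrac{b_1}{2}\sqrt{\Delta}$, so $\tfrac{\lambda_1-\theta}{g\sqrt{\Delta}} = \tfrac{b_1}{2g}$ is rational, and nonintegrality is precisely $g\not\divides \tfrac{\lambda_1-\theta}{\sqrt{\Delta}}$; this yields condition (1). If instead $a\neq 2\theta$, then $\Delta>1$ necessarily, and $\lambda_1-\theta = \tfrac{a-2\theta}{2} + \tfrac{b_1}{2}\sqrt{\Delta}$ has nonzero rational part $\tfrac{a-2\theta}{2}$, whence $\tfrac{\lambda_1-\theta}{g\sqrt{\Delta}} = \tfrac{a-2\theta}{2g\sqrt{\Delta}} + \tfrac{b_1}{2g}$ is irrational and so automatically fails to be an integer. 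This yields condition (2), and simultaneously shows that in this case proper fractional revival occurs whenever strong cospectrality holds.

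The main obstacle is the bookkeeping to confirm that conditions (1) and (2) genuinely exhaust the dichotomy from Corollary \ref{cor1} and capture every eigenvalue configuration where $\tfrac{\lambda_1-\theta}{g\sqrt{\Delta}}\notin\mathbb{Z}$. The key simplification is that once the case $a=2\theta$ is isolated, the question becomes a concrete divisibility statement about integers, while for $a\neq 2\theta$ irrationality automatically does the job. Both directions of the biconditional then follow immediately: the forward direction by running Theorem \ref{anotherchar}, Corollary \ref{cor1}, and Proposition \ref{q} in sequence, and the reverse direction by verifying $\tfrac{\lambda_1-\theta}{g\sqrt{\Delta}}\notin\mathbb{Z}$ in each of the two cases and reading Theorem \ref{anotherchar} backwards.
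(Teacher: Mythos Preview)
Your proposal is correct and follows essentially the same approach as the paper: both reduce to Theorem~\ref{anotherchar}, invoke Corollary~\ref{cor1} and Proposition~\ref{q} to rewrite $q\bigl(\tfrac{\lambda_1-\theta}{\lambda_1-\lambda_2}\bigr)=\tfrac{\lambda_1-\theta}{g\sqrt{\Delta}}$, and then split on whether $a=2\theta$, obtaining the divisibility condition in case~(1) and automatic irrationality in case~(2). Your write-up is slightly more explicit about the parity of $b_j$ and the reason $\Delta>1$ is forced when $a\neq 2\theta$, but the argument is otherwise identical to the paper's.
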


\begin{proof}
Let $u$ and $v$ be strongly cospectral. We show that the two conditions in Theorem \ref{anotherchar} are equivalent to (1) and (2). As $\phi(M,t)\in\mathbb{Z}[x]$, $q=\frac{\lambda_1-\lambda_2}{g\sqrt{\Delta}}$ by Proposition \ref{q}, $\theta$ is an integer by Corollary \ref{cor1}, and $\sigma_{uv}^+(M)$ satisfying the ratio condition is equivalent to Corollary \ref{cor1}(i,ii). If each $\lambda_j\in\sigma_{uv}^+(M)$ have the form $\frac{1}{2}(2\theta+b_j\sqrt{\Delta})$, then $q(\frac{\lambda_1-\theta}{\lambda_1-\lambda_2})=\frac{\lambda_1-\theta}{g\sqrt{\Delta}}\notin\mathbb{Z}$ if and only if $g\not\divides \frac{\lambda_1-\theta}{\sqrt{\Delta}}$. But if each $\lambda_j\in\sigma_{uv}^+(M)$ have the form $\frac{1}{2}(a+b_j\sqrt{\Delta})$, where $a\neq 2\theta$, then $\frac{\lambda_1-\theta}{\lambda_1-\lambda_2}\notin\mathbb{Q}$, and so $q(\frac{\lambda_1-\theta}{\lambda_1-\lambda_2})\notin\mathbb{Z}$. Theorem \ref{anotherchar} completes the proof.
\end{proof}

\begin{example}
\label{pgst2}
Consider the simple unweighted graph $Y$ in Fig \ref{fig}. Then $u$ and $v$ are signless Laplacian strongly cospectral with $\sigma_{uv}^+(Q)=\left\{3\pm\sqrt{5} \right\}$ and $\sigma_{uv}^-(Q)=\{2\}$. By Theorem \ref{anothercharA}(2), proper Laplacian FR occurs between $u$ and $v$ at time $\tau_1=\frac{\pi}{5}$. The same holds for the pair of vertices in $Y$ marked white.
\end{example}

For the Laplacian case, we have the following result, which is consistent with the characterization of proper Laplacian FR obtained by Chan et al.\ \cite[Theorem 26]{Chan2020}.

\begin{theorem}
\label{anothercharL}
Let $\phi(L,t)\in\mathbb{Z}[x]$ and $X$ be a simple positively weighted graph with twins $u$ and $v$. Proper Laplacian fractional revival occurs between $u$ and $v$ if and only if these vertices are strongly cospectral, $\sigma_u(L)\subseteq \mathbb{Z}$, and $g\not\divides \theta$, where $g=\operatorname{gcd}\left(\lambda_1,\lambda_3,\ldots, \lambda_r\right)$ and $\lambda_2=0$.
\end{theorem}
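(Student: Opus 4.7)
The proof plan is to specialize Theorem \ref{anothercharA} to the Laplacian of a simple positively weighted connected graph by exploiting two structural features of $L$: it is positive semidefinite, and the all-ones vector spans its kernel, so $0\in\sigma(L)$ with spectral idempotent $E_0=\tfrac{1}{n}J$ and $E_0\be_u=\tfrac{1}{n}\mathbf{1}=E_0\be_v$. Since $X$ is simple, $\omega=0$, and positive weights with connectedness give $\theta=\deg(u)+\eta>0$. Combined with strong cospectrality (forced by Lemma \ref{frpar}), this places $0$ in $\sigma_{uv}^+(L)$, so in the enumeration $\sigma_{uv}^+(L)=\{\lambda_1,\ldots,\lambda_r\}$ one may take $\lambda_2=0$.

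For the forward direction, I would apply Theorem \ref{anothercharA} and rule out its Case~(2). Substituting $\lambda_2=0$ into $\tfrac{1}{2}(a+b_2\sqrt{\Delta})$ with $\Delta>1$ square-free forces $a=b_2=0$ by irrationality of $\sqrt{\Delta}$, so every $\lambda_j=\tfrac{b_j}{2}\sqrt{\Delta}$ is either zero or irrational. For a nonzero such $\lambda_j$, the minimal polynomial $x^2-(b_j/2)^2\Delta$ of $\lambda_j$ over $\mathbb{Q}$ divides $\phi(L,t)\in\mathbb{Z}[x]$, so the Galois conjugate $-\tfrac{b_j}{2}\sqrt{\Delta}$ is also an eigenvalue of $L$; positive semidefiniteness makes this impossible. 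Hence $\sigma_{uv}^+(L)=\{0\}$, collapsing $r=1$ and precluding proper FR, a contradiction. So Case~(1) applies and each $\lambda_j$ has the form $\tfrac{1}{2}(2\theta+b_j\sqrt{\Delta})$ with $b_j$ even; inserting $\lambda_2=0$ and using $\theta\in\mathbb{Z}_{>0}$ forces $\Delta=1$, whence $\sigma_u(L)\subseteq\mathbb{Z}$. Using Proposition \ref{q} with $\Delta=1$ and $\lambda_2=0$, together with the identity $\gcd(a,a-b)=\gcd(a,b)$ applied repeatedly, the gcd of Proposition \ref{q} simplifies to $g=\gcd(\lambda_1,\lambda_3,\ldots,\lambda_r)$; since $g\mid\lambda_1$, the condition $g\not\divides\tfrac{\lambda_1-\theta}{\sqrt{\Delta}}=\lambda_1-\theta$ of Case~(1)(ii) is equivalent to $g\not\divides\theta$.

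For the converse, given strong cospectrality, $\sigma_u(L)\subseteq\mathbb{Z}$ and $g\not\divides\theta$, I place each $\lambda_j$ in Case~(1)(i) by taking $\Delta=1$ and $b_j=2(\lambda_j-\theta)\in 2\mathbb{Z}$; the same equivalence $g\not\divides\theta\Leftrightarrow g\not\divides\lambda_1-\theta$ supplies Case~(1)(ii), and Theorem \ref{anothercharA} yields proper Laplacian FR. The main obstacle is the elimination of Case~(2): it requires combining positive semidefiniteness of $L$ with the Galois-conjugate argument for integer-coefficient characteristic polynomials to forbid any irrational eigenvalue in $\sigma_{uv}^+(L)$ once $0$ is already known to be there.
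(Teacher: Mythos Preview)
Your proof is correct and follows essentially the same approach as the paper: both use that $0\in\sigma_{uv}^+(L)$ (via the all-ones eigenvector) together with positive semidefiniteness and a Galois-conjugate argument to force $\sigma_u(L)\subseteq\mathbb{Z}$, then reduce the non-integrality condition to $g\nmid\theta$ via Proposition~\ref{q} and the identity $\gcd(\lambda_1,\lambda_1-\lambda_j)=\gcd(\lambda_1,\lambda_j)$. The only cosmetic difference is that you route through Theorem~\ref{anothercharA} (splitting into its Cases (1) and (2) and eliminating both irrational subcases separately, using $\theta>0$ for the $a=2\theta$, $\Delta>1$ subcase), whereas the paper works directly from Theorem~\ref{anotherchar} and Corollary~\ref{cor1} and eliminates the irrational case in one stroke via closure of $\sigma_{uv}^+(L)$ under algebraic conjugation.
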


\begin{proof}
The assumption implies that $L$ is positive semi-definite with $0$ as a simple eigenvalue with an all-ones eigenvector. Thus, $0\in \sigma_{uv}^+(M)$, and so Corollary \ref{cor1} implies that the eigenvalues in $\sigma_{uv}^+(M)$ are either all integers, or all integer multiples of $\sqrt{\Delta}$ for some square-free integer $\Delta>1$. As $\sigma_{uv}^+(M)$ is closed under algebraic conjugation, $b\sqrt{\Delta}\in \sigma_{uv}^+(M)$ if and only if $-b\sqrt{\Delta}\in \sigma_{uv}^+(M)$, which cannot happen because $L$ is positive semi-definite. Thus, $\sigma_{uv}^+(M)$ satisfying the ratio condition is equivalent to $\sigma_u(L)\subseteq \mathbb{Z}$. Now, set $\lambda_2=0$. Then $q=\frac{\lambda_1}{g\sqrt{\Delta}}$ by Proposition \ref{q}, and so $q(\frac{\lambda_1-\theta}{\lambda_1-\lambda_2})=\frac{\lambda_1-\theta}{g}\not\in\mathbb{Z}$ if and only if $g\not\divides \theta$, where $g=\operatorname{gcd}\left(\lambda_1,\lambda_3,\ldots, \lambda_r\right)$ because $\sigma_u(L)\subseteq \mathbb{Z}$ and $\lambda_2=0$. Invoking Theorem \ref{anotherchar} completes the proof.
\end{proof}

If $1\in\sigma_{uv}^+(L)$, then proper FR does not occur between $u$ and $v$ (which holds even if they are not twins). Now, if $|\sigma_{uv}^+(L)|=2$, then proper FR occurs between $u$ and $v$ if and only if $\lambda_1\not\divides\theta$ by Theorem \ref{anothercharL}. On the other hand, if $|\sigma_{uv}^+(L)|\geq 3$, then the case that two non-zero eigenvalues in $\sigma_{uv}^+(L)$ are relatively prime yields proper FR, but the case that $g>1$ and either (i) $\theta$ is prime and $\theta\neq g$ or (ii) $\theta=1$ does not. We illustrate Theorem \ref{anothercharL} with the following example.

\begin{example}
\label{pgst1}
Consider the simple unweighted graph $Z$ in Fig \ref{fig}. Then $u$ and $v$ are Laplacian strongly cospectral with $\sigma_{uv}^+(L)=\left\{4,1,0\right\}$ and $\sigma_{uv}^-(L)=\{3\}$. As $1\in \sigma_{uv}^+(L)$, proper Laplacian FR does not occur between $u$ and $v$. However, they are Laplacian periodic with minimum period $2\pi$.
\end{example}

We now give an analog of Theorems \ref{pstpgstfr1} and \ref{long}(2).

\begin{theorem}
\label{sighhh}
Let $\phi(M,t)\in\mathbb{Z}[x]$ and suppose $u$ and $v$ are twins in $X$ that admit fractional revival.
\begin{enumerate}
\item Vertices $u$ and $v$ are periodic if and only if all elements in $\sigma_{uv}^+(M)$ have the form $\frac{1}{2}(2\theta+b_j\sqrt{\Delta})$, where $b_j$ is even and either $\Delta=1$ or $\Delta>1$ is square-free. The following also hold.
\begin{enumerate}
\item If $\nu_2\left(\frac{\lambda_1-\theta}{\sqrt{\Delta}}\right)\geq \nu_2(g)$, then neither PST nor balanced FR occurs between $u$ and $v$.
\item If $\nu_2(g)\geq \nu_2\left(\frac{\lambda_1-\theta}{\sqrt{\Delta}}\right)+1$, then PST occurs between $u$ and $v$ with minimum time $\frac{b\tau_1}{2g}$. Moreover, balanced FR occurs between $u$ and $v$ if and only if $\nu_2(g)\geq \nu_2\left(\frac{\lambda_1-\theta}{\sqrt{\Delta}}\right)+2$.
\end{enumerate}
\item Proper PGST occurs between $u$ and $v$ if and only all elements in $\sigma_{uv}^+(M)$ have the form $\frac{1}{2}(a+b_j\sqrt{\Delta})$, where $a\neq 2\theta$ , $b_j$ is even and $\Delta>1$ is square-free.
\end{enumerate}
\end{theorem}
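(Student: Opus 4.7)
The plan is to derive all parts of the theorem by bolting together tools that are already in hand: Theorem \ref{ratiocon} (ratio condition for periodicity), Corollary \ref{cor1} ($\theta\in\mathbb{Z}$), Theorem \ref{anothercharA} (which lists the two possible forms of $\sigma_{uv}^+(M)$ under proper FR), Proposition \ref{q} (the formulas for $q$ and $\tau_k$), Theorem \ref{long}(2) (the 2-adic PST/balanced-FR dichotomy in the periodic case), and Theorem \ref{pstpgstfr1} (the periodic/PGST dichotomy driven by rationality of $(\lambda_1-\theta)/(\lambda_1-\lambda_2)$).

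To prove the equivalence in (1), I would apply Theorem \ref{ratiocon}: periodicity forces $\sigma_u(M)\subseteq\mathbb{Z}$ or eigenvalues of the form $\tfrac12(a+c_j\sqrt{\Delta})$ with a common parity on the $c_j$. Since $\theta\in\sigma_u(M)$ is a rational integer by Corollary \ref{cor1}, expanding $\theta=\tfrac12(a+c_\theta\sqrt{\Delta})$ and using the irrationality of $\sqrt{\Delta}$ forces $c_\theta=0$ and $a=2\theta$; then the common parity makes every $c_j$ even, in particular every $b_j$ indexing an element of $\sigma_{uv}^+(M)$. The integer case is absorbed by allowing $\Delta=1$. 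The converse is immediate: the stated form makes every eigenvalue difference in $\sigma_u(M)$ an integer multiple of $\sqrt{\Delta}$, so the ratio condition holds and periodicity follows.

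For (1a) and (1b), the main task is to translate the dichotomy of Theorem \ref{long}(2), stated in terms of $(\nu_2(b),\nu_2(q))$ where $(\lambda_1-\theta)/(\lambda_1-\lambda_2)=a/b$ is in lowest terms, into the dichotomy asserted in terms of $\nu_2(m)$ and $\nu_2(g)$, where $m=(\lambda_1-\theta)/\sqrt{\Delta}$ and $g$ is as in Proposition \ref{q}. Setting $d_j=(\lambda_1-\lambda_j)/\sqrt{\Delta}$, the reduced form $a/b=m/d_2$ together with $q=d_2/g$ yields the identity
\[
\frac{b}{q}\,=\,\frac{g}{\gcd(m,d_2)},\qquad \text{hence}\qquad \nu_2(b)-\nu_2(q)\,=\,\nu_2(g)-\min\!\bigl(\nu_2(m),\nu_2(d_2)\bigr).
\]
Because $g\mid d_j$ for every $j$, one has $\nu_2(g)\le\nu_2(d_2)$. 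If $\nu_2(m)\ge\nu_2(g)$, the minimum is at least $\nu_2(g)$, so $\nu_2(q)\ge\nu_2(b)$ and Theorem \ref{long}(2a) gives (1a). If $\nu_2(g)\ge\nu_2(m)+1$, then $\nu_2(m)<\nu_2(g)\le\nu_2(d_2)$ forces the minimum to equal $\nu_2(m)$, so $\nu_2(b)-\nu_2(q)=\nu_2(g)-\nu_2(m)\ge 1$, and Theorem \ref{long}(2b) delivers PST at the stated minimum time together with the balanced-FR criterion $\nu_2(g)-\nu_2(m)\ge 2$, giving (1b).

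For (2), I would combine Theorem \ref{anothercharA} with Theorem \ref{pstpgstfr1}. Under the assumption of proper FR, the eigenvalues in $\sigma_{uv}^+(M)$ fall into exactly one of the two cases of Theorem \ref{anothercharA}. In case (1) the ratio $(\lambda_1-\theta)/(\lambda_1-\lambda_2)=b_1/(b_1-b_2)$ is rational, so Theorem \ref{pstpgstfr1}(1) yields periodicity and precludes proper PGST. In case (2), writing $\lambda_1-\theta=\tfrac12(a-2\theta)+\tfrac12 b_1\sqrt{\Delta}$ and $\lambda_1-\lambda_2=\tfrac12(b_1-b_2)\sqrt{\Delta}$ makes the ratio irrational (this uses $a\ne 2\theta$ and $\Delta>1$), so Theorem \ref{pstpgstfr1}(2) delivers proper PGST; by part (1) already proved, $u$ and $v$ are then not periodic, confirming properness. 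The main obstacle is the 2-adic translation in (1a) and (1b); once the identity $b/q=g/\gcd(m,d_2)$ and the divisibility $g\mid d_2$ are on the table, everything reduces to Theorems \ref{long} and \ref{pstpgstfr1}.
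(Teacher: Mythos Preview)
Your approach is correct and hits the same milestones as the paper: Corollary~\ref{cor1} and Theorem~\ref{ratiocon} for the equivalence in (1), the dichotomy of Theorem~\ref{anothercharA} combined with Theorem~\ref{pstpgstfr1} for (2). The one place where you diverge from the paper is the handling of (1a) and (1b). The paper simply observes that, by Proposition~\ref{q},
\[
q\cdot\frac{\lambda_1-\theta}{\lambda_1-\lambda_2}\;=\;\frac{\lambda_1-\theta}{g\sqrt{\Delta}}\;=\;\frac{m}{g},
\]
so that $\gamma_k=k\frac{m}{g}\pi$, and then \emph{re-runs the argument} of Theorem~\ref{long}(2) with the pair $(m,g)$ in place of $(qa,b)$; the 2-adic case analysis is formally identical. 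You instead keep Theorem~\ref{long} as a black box and build a translation layer, proving the identity $b/q=g/\gcd(m,d_2)$ and using $g\mid d_2$ to convert the hypotheses on $\nu_2(m),\nu_2(g)$ into hypotheses on $\nu_2(b),\nu_2(q)$. Both routes are valid; the paper's is a touch more direct since it avoids the detour through $\gcd(m,d_2)$, while yours has the advantage that it invokes Theorem~\ref{long} as a \emph{statement} rather than asking the reader to repeat its proof. One small caveat: Theorem~\ref{long}(2) is stated under the standing hypothesis $b\nmid q$, so when you land in the case $\nu_2(q)\ge\nu_2(b)$ you should note separately that if $b\mid q$ then every $\gamma_k$ is an integer multiple of $\pi$ and the conclusion of (1a) is immediate.
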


\begin{proof}
Invoking Corollary \ref{cor1}, either (1$i$) or (2) of Theorem \ref{anothercharA} holds. Since $\theta$ is an integer by Corollary \ref{cor1}, Theorem \ref{ratiocon} yields (1). As $\phi(M,t)\in\mathbb{Z}[x]$, Proposition \ref{q} implies that $q=\frac{\lambda_1-\lambda_2}{g\sqrt{\Delta}}$, and so $q(\frac{\lambda_1-\theta}{\lambda_1-\lambda_2})=\frac{\lambda_1-\theta}{g\sqrt{\Delta}}$. Applying the same argument in Theorem \ref{long}(2) proves (1b). To prove (2), let PGST occur between $u$ and $v$. Since (1$i$) yields periodicity, Theorem \ref{anothercharA}(2) holds. The converse follows from Theorem \ref{pstpgstfr1}(2).
\end{proof}

One can verify that Theorem \ref{sighhh}(1b) is in fact equivalent to the characterization of PST between twins whenever $\phi(M,t)\in\mathbb{Z}[x]$ provided by Kirkland et al. \cite[Theorem 10]{Monterde2022}. Hence, Theorem \ref{sighhh} not only generalizes of known results about periodicity, PST and PGST between twins whenever $\phi(M,t)\in\mathbb{Z}[x]$, but also provides a characterization of balanced FR between twins.

\begin{corollary}
Let $\phi(M,t)\in\mathbb{Z}[x]$ and suppose $u$ and $v$ are twins in $X$ that admit fractional revival. If at least one element in $\sigma_{uv}^+(M)$ is an integer, then $u$ and $v$ are periodic.
\end{corollary}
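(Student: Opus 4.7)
The plan is to apply Corollary~\ref{cor1} to the fractional revival hypothesis and then exploit the integrality assumption to collapse the spectrum into $\mathbb{Z}$. I would first invoke Corollary~\ref{cor1}: the assumptions $\phi(M,t)\in\mathbb{Z}[x]$ and that $u,v$ are twins admitting fractional revival give $\theta\in\mathbb{Z}$ along with the dichotomy that either (i) every element of $\sigma_{uv}^+(M)$ is an integer, or (ii) every element of $\sigma_{uv}^+(M)$ has the form $\frac{1}{2}(a+c_j\sqrt{\Delta})$ with $a\in\mathbb{Z}$, $\Delta>1$ a square-free integer, and $c_j$ an integer of the same parity as $a$. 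I would read these two alternatives in their standard, mutually exclusive form: case~(ii) is reserved for genuinely quadratic-irrational spectra, so every $c_j\neq 0$ there (a $c_j=0$ would produce a rational $\lambda_j=a/2$ and push the situation back into case~(i)).

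Next, I would use the integer hypothesis $\lambda_\star\in\sigma_{uv}^+(M)\cap\mathbb{Z}$ to eliminate case~(ii). Writing $\lambda_\star=\frac{1}{2}(a+c_\star\sqrt{\Delta})$ and using that $\sqrt{\Delta}$ is irrational (since $\Delta>1$ is square-free) forces $c_\star=0$, contradicting the standing convention that no $c_j$ vanishes in case~(ii). Therefore case~(i) must hold, namely $\sigma_{uv}^+(M)\subseteq\mathbb{Z}$.

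Combining this with $\theta\in\mathbb{Z}$ gives $\sigma_u(M)=\sigma_{uv}^+(M)\cup\{\theta\}\subseteq\mathbb{Z}$, so the ratio condition is trivially satisfied on $\sigma_u(M)$, and Theorem~\ref{ratiocon} certifies that $u$ is periodic. Because $u$ and $v$ are cospectral by Lemma~\ref{aut} (indeed strongly cospectral by Lemma~\ref{frpar}), $v$ is periodic at the same times, completing the argument.

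The main obstacle here is interpretive rather than technical: one has to commit to the strict reading of Corollary~\ref{cor1} in which case~(ii) genuinely excludes every integer eigenvalue. Once that is in hand, the entire argument reduces to a one-line rationality check against the irrationality of $\sqrt{\Delta}$ followed by a direct appeal to Theorem~\ref{ratiocon}, with no new computation required.
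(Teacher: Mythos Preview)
Your argument is correct and matches the paper's approach: both invoke Corollary~\ref{cor1} (for the dichotomy on $\sigma_{uv}^+(M)$ together with $\theta\in\mathbb{Z}$) and Theorem~\ref{ratiocon} (for periodicity), with your version making explicit the collapse of case~(ii) once an integer eigenvalue is present. The paper's own proof is the single sentence that the result is a direct consequence of those two statements, so you have simply unpacked that consequence and, appropriately, flagged the implicit convention that case~(ii) carries no rational eigenvalues.
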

\begin{proof}
This is a direct consquence of Corollary \ref{cor1} and Theorem \ref{ratiocon}.
\end{proof}

If $X$ is simple and positively weighted, then taking $M=L$ in the above corollary and noting that $0\in \sigma_{uv}^+(L)$ yields the result of Chan et al., which states that periodicity is necessary for proper Laplacian FR to occur \cite[Corollary 31]{Chan2020}. However, for $M\in\{A,Q\}$, the existence of proper FR between (twin) vertices need not imply that they are periodic (see Example \ref{kme} for $M=A$ and Example \ref{pgst2} for $M=Q$).

\section{Double Cones}\label{secDC}

In this section, we characterize the existence of FR between the apexes of double cones. The join of two weighted graphs $X$ and $Y$ is the graph $X\vee Y$ obtained by joining every vertex of $X$ with every vertex of $Y$ with an edge of weight one. If $X\in\{K_2,O_2\}$, then a graph isomorphic to $X\vee Y$ is called a \textit{double cone} on $Y$ and the two vertices of $X$ are called the \textit{apexes} of $X\vee Y$. In particular, $K_2\vee Y$ is called a \textit{connected} double cone, while $O_2\vee Y$ is called a \textit{disconnected} double cone. We start with the following lemma, which is a straightforward consequence of Corollary \ref{frtwinschar2} and Proposition \ref{q}.

\begin{lemma}
\label{dc}
Let $u$ and $v$ be strongly cospectral twins in $X$ with $|\sigma_{uv}^+(M)|=2$. Then $\sigma_{uv}^+(M)$ satisfies the ratio condition, $q=1$ and $\tau_k=\frac{2 k\pi}{\lambda_1-\lambda_2}$. If we add that $\phi(M,t)\in\mathbb{Z}[x]$, then $g=\frac{\lambda_1-\lambda_2}{\sqrt{\Delta}}$ and $\tau_k=\frac{2k\pi}{\sqrt{\Delta}}$.
\end{lemma}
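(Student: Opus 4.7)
The plan is to observe that Lemma \ref{dc} is essentially a specialization of Corollary \ref{frtwinschar1}, Corollary \ref{frtwinschar2}, and Proposition \ref{q} to the case $r=2$, in which every list over $j\in\{2,\ldots,r\}$ collapses to a single entry. So I would simply unwind the definitions from those earlier results, check that each formula trivializes in the expected way, and read off the claimed expressions for $q$, $g$, and $\tau_k$.

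First, with $\sigma_{uv}^+(M)=\{\lambda_1,\lambda_2\}$ containing only two elements, any ratio $\frac{\lambda_p-\lambda_q}{\lambda_j-\lambda_k}$ with $\lambda_j\neq \lambda_k$ and all four eigenvalues drawn from $\sigma_{uv}^+(M)$ forces $\{\lambda_j,\lambda_k\}=\{\lambda_1,\lambda_2\}$ and $\lambda_p-\lambda_q\in\{0,\pm(\lambda_1-\lambda_2)\}$, so the ratio lies in $\{0,\pm 1\}\subseteq \mathbb{Q}$ and the ratio condition holds automatically. Next, in the notation of Corollary \ref{frtwinschar1}, the only index contributing is $j=2$, and writing $\frac{\lambda_1-\lambda_2}{\lambda_1-\lambda_2}=\frac{1}{1}$ with $\gcd(1,1)=1$ gives $q_2=1$, hence $q=\operatorname{lcm}(q_2)=1$. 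Plugging this into $\tau_k=\frac{2\pi qk}{\lambda_1-\lambda_2}$ from Corollary \ref{frtwinschar2} yields $\tau_k=\frac{2k\pi}{\lambda_1-\lambda_2}$, as claimed.

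For the integer-polynomial case, Proposition \ref{q} gives $g=\gcd\!\left(\frac{\lambda_1-\lambda_2}{\sqrt{\Delta}},\ldots,\frac{\lambda_1-\lambda_r}{\sqrt{\Delta}}\right)$; with $r=2$ this list has a single entry $\frac{\lambda_1-\lambda_2}{\sqrt{\Delta}}$, which is therefore the gcd (note this is a positive integer by Corollary \ref{cor1}, since either $\Delta=1$ and $\lambda_1-\lambda_2\in\mathbb{Z}$, or the $\lambda_j$'s have the form $\tfrac{1}{2}(a+c_j\sqrt{\Delta})$ with $c_1-c_2$ even). Substituting into the formula $\tau_k=\frac{2\pi k}{g\sqrt{\Delta}}$ of Proposition \ref{q} then recovers the asserted expression for $\tau_k$ in this case and is consistent with the previous expression $\tau_k=\frac{2k\pi}{\lambda_1-\lambda_2}$.

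There is no real obstacle here: every step is a specialization of a formula that has already been proved. The only subtlety worth flagging is that the ratio condition, as literally stated, requires something for every quadruple in $\sigma_{uv}^+(M)$, and one should verify (as above) that a two-element set satisfies it vacuously; likewise one should confirm that the single-element gcd appearing in Proposition \ref{q} is just that element itself. Beyond these bookkeeping checks, the proof is a direct substitution.
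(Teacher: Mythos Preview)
Your proposal is correct and follows exactly the approach the paper indicates: the paper simply states that the lemma ``is a straightforward consequence of Corollary~\ref{frtwinschar2} and Proposition~\ref{q},'' and you carry out precisely that specialization to $r=2$, unwinding the definitions of $q$, $\tau_k$, and $g$ in the two-element case. Your additional bookkeeping (verifying the ratio condition trivially holds for a two-element set, and that the single-term gcd is the term itself) is the natural detail behind the paper's one-line justification.
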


\subsection*{6.1\ \ Disconnected double cones}

We first deal with the Laplacian matrix.

\begin{theorem}
\label{dclap}
Let $Y$ be a simple positively weighted graph on $n$ vertices. For every integer $k>0$ such that $(n+2)\not\divides 2k$, proper Laplacian $(e^{i\zeta_k}\cos\gamma_k,ie^{i\zeta_k}\sin\gamma_k)$-FR occurs between the apexes $u$ and $v$ of $O_2\vee Y$ at time $\tau_k$, where $\tau_k=\gamma_k=\frac{2k\pi}{n+2}$ and $\zeta_k=\frac{2k(n+1)\pi}{n+2}$. The following also hold.
\begin{enumerate}
\item If either $n$ is odd or $n\equiv 0$ (mod 4), then neither PST and nor balanced FR occurs between $u$ and $v$.
\item If $n\equiv 2$ (mod 4), then PST occurs between $u$ and $v$ with minimum time $\frac{\pi}{2}$ and balancedFR occurs between $u$ and $v$ if and only if $\nu_2(n+2)\geq 3$.
\end{enumerate}
\end{theorem}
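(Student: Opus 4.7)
The plan is to reduce the theorem to a direct application of Lemma \ref{dc}, Corollary \ref{frtwinschar2}, and Theorem \ref{sighhh} (or equivalently Theorem \ref{long}); the only real work is identifying $\theta$ and the sets $\sigma_{uv}^{\pm}(L)$ for the apexes $u, v$. Since each apex is adjacent to all of $V(Y)$ and there are no loops nor an edge between $u$ and $v$, the apexes are twins with $\omega = \eta = 0$ and $\deg(u) = n$, so Lemma \ref{alphabeta} gives $\theta = n$.

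Next I would determine the action of the spectral idempotents of $L(O_2 \vee Y)$ on $\be_u$ and $\be_v$ using the standard Laplacian spectrum of a join. The eigenvectors fall into four families: (i) the all-ones vector, with eigenvalue $0$; (ii) the vector equal to $1$ on $\{u,v\}$ and $-2/n$ on $V(Y)$, with eigenvalue $n+2$; (iii) the vector $\be_u - \be_v$, with eigenvalue $n$; and (iv) for each Laplacian eigenvector of $Y$ orthogonal to the all-ones vector, its extension by zero to $\{u, v\}$, with eigenvalue $\beta_j + 2$. Family (iv) contributes zero on the $u, v$ entries, so the remaining three pieces show that $u$ and $v$ are strongly cospectral with
\[
\sigma_{uv}^+(L) = \{0, n+2\}, \qquad \sigma_{uv}^-(L) = \{n\}.
\]

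Lemma \ref{dc} applied with $\lambda_1 = n+2$ and $\lambda_2 = 0$ then yields $q = 1$ and $\tau_k = \tfrac{2k\pi}{n+2}$. Substituting $\theta = n$ into the formulas of Corollary \ref{frtwinschar2} produces
\[
\gamma_k \equiv \tfrac{1}{2}\tau_k(\lambda_1 - \theta) = \tfrac{2k\pi}{n+2}, \qquad \zeta_k \equiv \tfrac{1}{2}\tau_k(\lambda_1 + \theta) = \tfrac{2k(n+1)\pi}{n+2},
\]
and the FR at $\tau_k$ is proper precisely when $k \cdot \tfrac{\lambda_1 - \theta}{\lambda_1 - \lambda_2} = \tfrac{2k}{n+2}$ is not an integer, i.e., $(n+2) \nmid 2k$.

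For parts (1) and (2), I would invoke Theorem \ref{sighhh} with $\tfrac{\lambda_1 - \theta}{\lambda_1 - \lambda_2} = \tfrac{2}{n+2}$ and $\nu_2(q) = 0$. If $n$ is odd, the fraction is already in lowest terms with $\nu_2(b) = 0 = \nu_2(q)$; if $n \equiv 0 \pmod{4}$, writing $n = 2m$ gives $\tfrac{2}{n+2} = \tfrac{1}{m+1}$ with $m+1$ odd, so again $\nu_2(b) = 0 = \nu_2(q)$. Either way neither PST nor balanced FR can occur. If $n \equiv 2 \pmod{4}$, then $m+1$ is even, so $\nu_2(b) > \nu_2(q)$ and PST occurs at $\tfrac{b\tau_1}{2g} = \tfrac{(m+1)(2\pi/(n+2))}{2} = \tfrac{\pi}{2}$ (using $g = \gcd(b, q) = 1$); balanced FR then occurs iff $\nu_2(m+1) \geq 2$, which is equivalent to $\nu_2(n+2) \geq 3$ because $n+2 = 2(m+1)$. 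The main piece of bookkeeping is the spectral computation in step two — in particular, checking that the type-(iv) eigenvectors vanish on $\{u, v\}$ even when some $\beta_j + 2$ coincides with $n$ or $n+2$ — after which the conclusion is a pure specialization of the general theorems combined with a short parity analysis.
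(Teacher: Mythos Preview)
Your argument is correct and matches the paper's approach: establish $\sigma_{uv}^+(L)=\{0,n+2\}$, $\sigma_{uv}^-(L)=\{n\}$ and strong cospectrality (the paper cites \cite[Lemma 5(2)]{Monterde2022} and \cite[Corollary 6.9]{MonterdeELA} rather than computing the join eigenvectors directly), then apply Lemma~\ref{dc}, Corollary~\ref{frtwinschar2}, and Theorem~\ref{long} with $\tfrac{\lambda_1-\theta}{\lambda_1-\lambda_2}=\tfrac{2}{n+2}$ and $q=1$. One small fix: for parts (1)--(2) you should cite Theorem~\ref{long} rather than Theorem~\ref{sighhh}, since the latter assumes $\phi(M,t)\in\mathbb{Z}[x]$, which need not hold when $Y$ is merely positively weighted; your computation already uses the $\nu_2(b)$ versus $\nu_2(q)$ and $g=\gcd(b,q)$ parameters of Theorem~\ref{long}, so this is only a labeling slip.
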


\begin{proof}
Let $u$ and $v$ be the apexes of $O_2\vee Y$. By \cite[Lemma 5(2)]{Monterde2022}, $\sigma_u(L)=\{0,n,n+2\}$, which applies even if $Y$ is simple and positively weighted. Now, \cite[Corollary 6.9]{MonterdeELA} implies that $u$ and $v$ are strongly cospectral with $\sigma_{uv}^+(L)=\{0,n+2\}$. Invoking Corollary \ref{frtwinschar2} and Lemma \ref{dc} yields $q=1$ and $(e^{i\gamma_k}\cos\gamma_k,ie^{i\gamma_k}\sin\gamma_k)$-FR between $u$ and $v$ at time $\tau_k$ for all integers $k>0$. Since $\frac{\lambda_1-\theta}{\lambda_1-\lambda_2}=\frac{2}{n+2}$ and $q=1$, applying Theorem \ref{long} completes the proof.
\end{proof}

Chan et al.\ were the first to show that any disconnected double cone on $n$ vertices admit proper Laplacian FR between apexes at $\tau_1$. Using the machinery we developed for twins, we add to this result by determining all parameters $\tau$, $\zeta$ and $\gamma$ such that proper Laplacian $(e^{i\zeta}\cos\gamma,ie^{i\zeta}\sin\gamma)$-FR occurs between $u$ and $v$ at time $\tau$, and identifying all values of $n$ which yields balanced FR.

By Theorem \ref{dclap}(2), Laplacian PST occurs between $u$ and $v$ in $O_2\vee Y$ at time $\tau_1$ if and only if $n=2$, a result that was first established in \cite[Theorem 11]{Chan2020}. However, we clarify that for two vertices that exhibit proper Laplacian FR with minimum time $\tau_1$, the nonexistence of Laplacian PST at time $\tau_1$ between then does not imply that Laplacian PST never happens between them. For example, if $n=4p-2$ for some odd $p\geq 3$, then $\nu_2(n+2)=2$. By Theorem \ref{dclap}, proper Laplacian FR occurs between $u$ and $v$ at time $\tau_1=\frac{\pi}{2p}$, while Theorem \ref{dclap}(2) implies that Laplacian PST occurs between them at $p\tau_1=\frac{\pi}{2}$. 

We also mention some interesting results of Chan et al. in \cite{Chan2020}. Let $X$ be a simple unweighted graph on $m$ vertices. They proved that $X$ exhibits proper Laplacian FR at time $\frac{2\pi}{m}$ if and only if $X$ is a disconnected double cone, and they also showed that if $m$ is prime and $X$ admits proper Laplacian FR, then $X$ is a disconnected double cone. We note, however, that if $m$ is composite and $X$ admits proper Laplacian FR, then $X$ need not be a double cone. For example, the cycle $C_6$, which is 2-regular, admits adjacency $(-\frac{1}{2},\frac{\sqrt{3}}{2}i)$-FR between antipodal vertices at time $\frac{2\pi}{3}$ (see \cite[Example 7.3]{Chan2019}).

For $M\in\{A,Q\}$, we assume that $Y$ is a simple unweighted $\ell$-regular graph. Let $u$ and $v$ be the apexes of $O_2\vee Y$. From \cite[Corollary 6.9(1)]{MonterdeELA}, $u$ and $v$ are strongly cospectral, and Lemma \ref{strcospchar} yields $\sigma_{uv}^-(A)=\{0\}$ and $\sigma_{uv}^-(Q)=\{n\}$. Invoking Lemma 4(2b) and Corollary 4(2b) in \cite{Monterde2022} gives us $\sigma_u(A)=\{\lambda^{\pm},0\}$, where $\lambda^{\pm}=\frac{1}{2}(\ell\pm\sqrt{\ell^2+8n})$, and $\sigma_u(Q)=\{\lambda^{\pm},n\}$, where $\lambda^{\pm}=\frac{1}{2}(2\ell+n+2\pm\sqrt{(2\ell+n+2)^2-8\ell n})$, resp.
In both cases, $|\sigma_{uv}^+(M)|=2$. For $M=A$, we have the following.

\begin{theorem}
\label{dcadj}
Let $Y$ be a simple unweighted $\ell$-regular graph on $n$ vertices and $D_1=\ell^2+8n$. Then adjacency $(e^{i\gamma_k}\cos\gamma_k,ie^{i\gamma_k}\sin\gamma_k)$-FR occurs between the apexes of $O_2\vee Y$ at time $\tau_k$ for all integers $k>0$, where $\tau_k=\frac{2k\pi}{\sqrt{D_1}}$ and $\gamma_k=\frac{\left(\ell+\sqrt{D_1}\right)k\pi}{2\sqrt{D_1}}$. The following also hold.
\begin{enumerate}
\item If $\ell=0$, then PST occurs between $u$ and $v$ with minimum time $\tau_1=\frac{\pi}{\sqrt{2n}}$.
\item If $\ell>0$ and $\ell^2+8n$ is not a perfect square, then proper $(e^{i\gamma_k}\cos\gamma_k,e^{i\gamma_k}\sin\gamma_k)$-FR occurs between $u$ and $v$ at each $\tau_k$. Proper PGST also occurs between $u$ and $v$.
\item Let $\ell>0$, and suppose that $n=\frac{1}{2}s(\ell+s)$ for some integer $s$ such that $s(\ell+s)$ is even and $f=\operatorname{gcd}(\ell+s,s)$. Then $\{\tau_k:k\ \text{is an integer such that}\ \frac{\ell+2s}{f}\not\divides k\}$ is the set of all times such that proper fractional revival occurs between $u$ and $v$. The following also hold.
\begin{enumerate}
\item  If $\nu_2(\ell)\leq \nu_2(s)$, then neither PST nor balanced FR occurs between $u$ and $v$.
\item If $\nu_2(\ell)\geq \nu_2(s)+1$, then PST occurs between $u$ and $v$ with minimum time $\frac{\pi}{2}$ and balanced FR occurs between $u$ and $v$ if and only if $\nu_2(\ell)=\nu_2(s)+1$.
\end{enumerate}
\end{enumerate}
\end{theorem}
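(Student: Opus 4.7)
The approach is to view this as a direct application of the machinery from Sections \ref{secFR}--\ref{secFR1}, with the apexes $u,v$ of $O_2\vee Y$ supplying the strongly cospectral pair of twins. First I would extract the spectral data: by \cite[Corollary 6.9]{MonterdeELA} the apexes of a join are strongly cospectral adjacency twins, and since $u,v$ are non-adjacent and loopless, Lemma \ref{alphabeta} gives $\theta=0$. Combined with the eigenvalues $\lambda^{\pm}=\frac{1}{2}(\ell\pm\sqrt{D_1})$ supplied by \cite[Lemma 4(2b)]{Monterde2022}, this yields $\sigma_{uv}^-(A)=\{0\}$ and $\sigma_{uv}^+(A)=\{\lambda^+,\lambda^-\}$. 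Because $|\sigma_{uv}^+(A)|=2$, Lemma \ref{dc} gives $q=1$ and $\tau_k=2k\pi/\sqrt{D_1}$, and Corollary \ref{frtwinschar2} combined with $\theta=0$ forces $\zeta_k\equiv\gamma_k\equiv\frac{1}{2}\tau_k\lambda^+\equiv(\ell+\sqrt{D_1})k\pi/(2\sqrt{D_1})\pmod{\pi}$. Corollary \ref{frtwinschar2} then yields the advertised $(e^{i\gamma_k}\cos\gamma_k,ie^{i\gamma_k}\sin\gamma_k)$-FR at every $\tau_k$ without further hypothesis.

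Next, I would split on the arithmetic nature of $\sqrt{D_1}$. For part (1) ($\ell=0$), $\gamma_k=k\pi/2$ and $\frac{\lambda^+-\theta}{\lambda^+-\lambda^-}=\frac{1}{2}$, so Theorem \ref{pstpgstfr1}(1) with $a=1$, $b=2$ produces PST at $\tau_1=\pi/\sqrt{2n}$. For part (2) ($\ell>0$, $D_1$ not a perfect square), $\sqrt{D_1}$ is irrational and $\gamma_k/\pi=k/2+k\ell/(2\sqrt{D_1})$ is irrational for every nonzero $k$, so $\gamma_k\not\equiv 0\pmod{\pi}$ (proper FR at every $\tau_k$) and $\gamma_k$ is not a rational multiple of $\pi$, whence Theorem \ref{pstpgstfr1}(2) yields the proper PGST. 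For part (3) ($\ell>0$, $D_1$ a perfect square), the identity $d^2-\ell^2=8n$ forces $d:=\sqrt{D_1}\equiv\ell\pmod 2$; setting $s=(d-\ell)/2$ gives $\lambda^+=\ell+s$, $\lambda^-=-s$, and $n=\frac{1}{2}s(\ell+s)$, with $s(\ell+s)=2n$ automatically even. Then $\frac{\lambda^+-\theta}{\lambda^+-\lambda^-}=\frac{\ell+s}{\ell+2s}=\frac{a}{b}$ in lowest terms with $b=(\ell+2s)/f$ and $f=\gcd(\ell+s,s)$, and Theorem \ref{long} applies verbatim using $q=1$ and $g=\gcd(q,b)=1$, identifying the proper-FR times as $\{\tau_k:b\not\divides k\}$.

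Within part (3), I would dispatch (3a) and (3b) via a 2-adic computation of $\nu_2(b)=\nu_2(\ell+2s)-\nu_2(f)$, using $\nu_2(f)=\nu_2(\gcd(\ell,s))=\min(\nu_2(\ell),\nu_2(s))$. A case split on the order between $\nu_2(\ell)$ and $\nu_2(s)$ shows $\nu_2(b)=0$ exactly when $\nu_2(\ell)\leq\nu_2(s)$, so Theorem \ref{long}(2a) gives (3a); and $\nu_2(b)\geq 1$ exactly when $\nu_2(\ell)\geq\nu_2(s)+1$, with $\nu_2(b)\geq 2$ iff $\nu_2(\ell)=\nu_2(s)+1$, which via Theorem \ref{long}(2b) supplies the minimum PST time $b\tau_1/(2g)$ together with the balanced-FR criterion. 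I expect the main obstacle to be this 2-adic bookkeeping, particularly the borderline case $\nu_2(\ell)=\nu_2(s)+1$ in which $\nu_2(\ell+2s)$ jumps above $\nu_2(2s)$; every other step is a direct substitution into Lemmas \ref{dc}, \ref{pstpgstfr} and Theorems \ref{frtwinschar}, \ref{pstpgstfr1}, \ref{long}.
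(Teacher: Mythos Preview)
Your proposal is correct and follows essentially the same approach as the paper: both extract the spectral data $\sigma_{uv}^+(A)=\{\lambda^\pm\}$, $\theta=0$ from \cite{MonterdeELA,Monterde2022}, invoke Lemma~\ref{dc} and Corollary~\ref{frtwinschar2} to obtain $q=1$ and the FR at each $\tau_k$, handle (1) and (2) via direct computation of $\gamma_k$ and Theorem~\ref{pstpgstfr1}, and reduce (3) to Theorem~\ref{long} through the same $2$-adic analysis of $b=(\ell+2s)/f$. Your use of the identity $\nu_2(f)=\nu_2(\gcd(\ell,s))=\min(\nu_2(\ell),\nu_2(s))$ is a slightly cleaner packaging of the case split the paper carries out, but the content is identical.
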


\begin{proof}
Corollary \ref{frtwinschar2} and Lemma \ref{dc} imply that $q=1$ and $(e^{i\gamma_k}\cos\gamma_k,ie^{i\gamma_k}\sin\gamma_k)$-FR occurs between $u$ and $v$ at time $\tau_k$ for all integers $k>0$. If $\ell=0$, then $\gamma_k=\frac{k\pi}{2}$, and so PST occurs between $u$ and $v$ at time $\tau_k$ for every odd $k$, but balanced FR does not. Thus, (1) holds. Now, let $\ell>0$. If $D_1$ is not a perfect square, then $\frac{\lambda^+-\theta}{\lambda^+-\lambda^+}\notin \mathbb{Q}$. Invoking Theorem \ref{pstpgstfr1}(2) proves (2). Finally, note that $D_1$ is a perfect square if and only if $8n=4s(\ell+s)$ for some integer $s$ such that $s(\ell+s)$ is even. Thus, if $\ell^2+8n$ is a perfect square, then $\frac{\lambda^+-\theta}{\lambda^+-\lambda^+}=\frac{(\ell+s)/f}{(\ell+2s)/f}$, where $f=\operatorname{gcd}(\ell+s,\ell+2s)=\operatorname{gcd}(\ell+s,s)$.
\begin{itemize}
\item Let $\nu_2(\ell)\leq \nu_2(s)$. If $\nu_2(\ell)=\nu_2(s)$,
then $\nu_2(\ell+2s)=\nu_2(\ell)=\nu_2(s)<\nu_2(\ell+s)$, and so $\nu_2(f)=\nu_2(s)=\nu_2(\ell+2s)$. But if $\nu_2(\ell)<\nu_2(s)$, then $\nu_2(\ell+2s)=\nu_2(\ell+s)=\nu_2(\ell)<\nu_2(s)$, and so $\nu_2(f)=\nu_2(\ell+s)=\nu_2(\ell+2s)$. In both cases, we get that $(\ell+2s)/f$ is odd.
\item Let $\nu_2(\ell)\geq \nu_2(s)+1$. Then $\nu_2(\ell+s)=\nu_2(s)\geq 1$ because $s(\ell+s)$ is even, and so $\nu_2(f)=\nu_2(s)$. If $\nu_2(\ell)=\nu_2(s)+1$, then $\nu_2(\ell+2s)\geq \nu_2(s)+2$, and hence, $\nu_2((\ell+2s)/f)\geq 2$. Meanwhile, if $\nu_2(\ell)\geq \nu_2(s)+2 $, then $\nu_2(\ell+2s)=\nu_2(s)+1$, and so $\nu_2((\ell+2s)/f)=1$.
\end{itemize}
Since $q=1$, applying Theorem \ref{long} completes the proof of (3).
\end{proof}

For the signless Laplacian case, we have the following result. Note that we omit the case $\ell=0$ as $O_2\vee Y$ is a bipartite graph and so Theorem \ref{dclap} applies.

\begin{theorem}
\label{dcslap}
Let $Y$ be a simple unweighted $\ell$-regular graph on $n$ vertices, where $\ell>0$ and $D_2=(2\ell+n+2)^2-8\ell n$. Then signless Laplacian $(e^{i\zeta_k}\cos\gamma_k,ie^{i\zeta_k}\sin\gamma_k)$-FR occurs between the apexes of $O_2\vee Y$ at $\tau_k$ for all integers $k>0$, where $\tau_k=\frac{2k\pi}{\sqrt{D_2}}$, $\gamma_k=\frac{\left(2\ell-n+2+\sqrt{D_2}\right)k\pi}{2\sqrt{D_2}}$ and $\zeta_k=\frac{\left(2\ell+3n+2+\sqrt{D_2}\right)k\pi}{2\sqrt{D_2}}$. The following also hold.
\begin{enumerate}
\item If $n=2\ell+2$, then PST between $u$ and $v$ at $\tau_k$ for every odd $k$.
\item If $n\neq 2\ell+2$ and $D_2$ is not a perfect square, then proper $(e^{i\gamma_k}\cos\gamma_k,e^{i\gamma_k}\sin\gamma_k)$-FR occurs between $u$ and $v$ at each $\tau_k$. Proper PGST also occurs between $u$ and $v$.
\item Let $n\neq 2\ell+2$, $n=\frac{s(2\ell-s+2)}{2\ell-s}$ for some integer $s$ such that $s(2\ell+n-s+2)$ is even (i.e., $D_2$ is a perfect square) and $f=\operatorname{gcd}(2\ell-s+2,n-s)$. Then $\{\tau_k:k\ \text{is an integer such that}\ \frac{2\ell+n-2s+2}{f}\not\divides k\}$ is the set of all times such that proper FR occurs between $u$ and $v$.
\begin{enumerate}
\item If $\nu_2(2\ell-s+2)\neq \nu_2(n-s)$, then neither PST nor balanced FR occurs between $u$ and $v$.
\item If $\nu_2(2\ell-s+2)=\nu_2(n-s)$, then PSTr occurs between $u$ and $v$ with minimum time $\frac{\pi}{2}$ and balanced FR occurs between $u$ and $v$ if and only if $\nu_2(a+b)\geq 2$, where $a$ and $b$ are odd integers such that $2\ell-s+2=2^{\nu_2(2\ell-s+2)}a$ and $n-s=2^{\nu_2(n-s)}b$.
\end{enumerate}
\end{enumerate}
\end{theorem}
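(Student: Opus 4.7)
The plan is to mirror the proof of Theorem \ref{dcadj}, substituting the signless Laplacian spectral data. First I would use Lemma \ref{alphabeta} with $\omega=\eta=0$ and $\operatorname{deg}(u)=n$ to compute $\theta=n$, then combine \cite[Corollary 6.9(1)]{MonterdeELA} with \cite[Corollary 4(2b)]{Monterde2022} to record that $u$ and $v$ are $Q$-strongly cospectral with $\sigma_u(Q)=\{\lambda^+,\lambda^-,n\}$ for $\lambda^\pm=\tfrac{1}{2}(2\ell+n+2\pm\sqrt{D_2})$. Lemma \ref{strcospchar} then forces $\sigma_{uv}^+(Q)=\{\lambda^+,\lambda^-\}$ and $\sigma_{uv}^-(Q)=\{n\}$, so $|\sigma_{uv}^+(Q)|=2$ and Lemma \ref{dc} supplies $q=1$ and $\tau_k=\tfrac{2k\pi}{\sqrt{D_2}}$. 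The stated $\gamma_k$ and $\zeta_k$ follow directly from $\gamma_k\equiv\tfrac{1}{2}\tau_k(\lambda^+-n)$ and $\zeta_k\equiv\tfrac{1}{2}\tau_k(\lambda^++n)$ in Corollary \ref{frtwinschar2}, which also certifies the FR itself.

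For claims (1) and (2), I would analyze $R=\tfrac{\lambda^+-\theta}{\lambda^+-\lambda^-}=\tfrac{2\ell-n+2+\sqrt{D_2}}{2\sqrt{D_2}}$. When $n=2\ell+2$, the rational part of the numerator vanishes and $R=\tfrac{1}{2}$, so $\gamma_k=\tfrac{k\pi}{2}$, which gives PST at every odd $k$ and periodicity at every even $k$ via Theorem \ref{pstpgstfr1}(1). When $n\neq 2\ell+2$ and $D_2$ is not a perfect square, $\sqrt{D_2}$ is irrational, so $R=\tfrac{1}{2}+\tfrac{2\ell-n+2}{2\sqrt{D_2}}$ is irrational and $\gamma_k$ is never a rational multiple of $\pi$; Theorem \ref{pstpgstfr1}(2) then yields proper PGST and guarantees proper FR at every $\tau_k$.

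Claim (3) is where the real work lies. I would first derive the parametrization by solving $D_2=(2\ell+n+2-2s)^2$, which reduces to $8\ell n=4s(2\ell+n-s+2)$, equivalently $n=\tfrac{s(2\ell-s+2)}{2\ell-s}$, and gives $\sqrt{D_2}=2\ell+n-2s+2$, hence $\lambda^+-\theta=2\ell-s+2$ and $\lambda^+-\lambda^-=2\ell+n-2s+2$. Using $2\ell+n-2s+2=(2\ell-s+2)+(n-s)$ together with $\gcd(X,X+Y)=\gcd(X,Y)$, the gcd of numerator and denominator of $R$ collapses to $f=\gcd(2\ell-s+2,n-s)$, so $R=a'/b'$ in lowest terms with $b'=(2\ell+n-2s+2)/f$. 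Invoking Theorem \ref{long}(2) with $q=1$ (so $\gcd(b',q)=1$) then gives the stated description of the proper FR times. For (3a), distinct valuations $\nu_2(2\ell-s+2)\neq\nu_2(n-s)$ force $\nu_2(2\ell+n-2s+2)=\min(\nu_2(2\ell-s+2),\nu_2(n-s))=\nu_2(f)$, so $b'$ is odd and Theorem \ref{long}(2a) applies. For (3b), writing $2\ell-s+2=2^{\nu}a$ and $n-s=2^{\nu}b$ with $a,b$ odd yields $\nu_2(b')=\nu_2(a+b)\geq 1$, and Theorem \ref{long}(2b) delivers both the PST assertion and the balanced FR criterion $\nu_2(a+b)\geq 2$.

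The main obstacle is the bookkeeping in (3): verifying that the integer $s$-parametrization correctly enumerates all cases where $D_2$ is a perfect square under the auxiliary parity condition on $s(2\ell+n-s+2)$, and then tracking the 2-adic valuations cleanly through the reduction to $b'$. Once the parametrization is set up, all the structural results (Lemma \ref{dc}, Corollary \ref{frtwinschar2}, Theorems \ref{pstpgstfr1} and \ref{long}) apply directly, so the rest is careful arithmetic.
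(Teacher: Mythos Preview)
Your proposal is correct and follows essentially the same route as the paper: the paper also invokes Corollary~\ref{frtwinschar2} and Lemma~\ref{dc} (with the spectral data recorded just before the theorem) for the main FR statement, appeals to Theorem~\ref{pstpgstfr1} for parts (1) and (2), and for part (3) uses exactly your parametrization $8\ell n=4s(2\ell+n-s+2)$ to obtain $R=\frac{(2\ell-s+2)/f}{(2\ell+n-2s+2)/f}$ before deferring to ``the same argument in the proof of Theorem~\ref{dcadj}'' (i.e., Theorem~\ref{long} with $q=1$). Your explicit $2$-adic valuation analysis in (3a)--(3b) simply spells out that deferred step.
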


\begin{proof}
Corollary \ref{frtwinschar2} and Lemma \ref{dc} yield $(e^{i\gamma_k}\cos\gamma_k,ie^{i\gamma_k}\sin\gamma_k)$-FR occurs between $u$ and $v$ at $\tau_k$ for every $k>0$. Moreover, (1) and (2) follow immediately from Theorem \ref{pstpgstfr1}. To prove (3), let $D_2=(2\ell+n+2)^2-8\ell n$ be a perfect square, i.e., $8\ell n=4s(2\ell+n-s+2)$ for some integer $s$ such that $s(2\ell+n-s+2)$ is even. This gives us $n=\frac{s(2\ell-s+2)}{2\ell-s}$, and one checks that $\frac{\lambda^+-\theta}{\lambda^+-\lambda^+}=\frac{2\ell-n+2+\sqrt{D_2}}{2\sqrt{D_2}}=\frac{(2\ell-s+2)/f}{(2\ell+n-2s+2)/f}$, where $f=\operatorname{gcd}(2\ell-s+2,n-s)$. Using the same argument in the proof of Theorem \ref{dcadj} proves (3).
\end{proof}

\subsection*{6.2\ \ Connected double cones}

For connected double cones, we have the following negative result with respect to the Laplacian matrix.

\begin{theorem}
\label{cclap}
Let $Y$ be a simple positively weighted graph on $n$ vertices. If the edge joining the apexes of $K_2\vee Y$ has weight one, then proper FR does not occur between the apexes of $K_2\vee Y$.
\end{theorem}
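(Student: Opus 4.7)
The plan is to show that, under the weight-one hypothesis on the edge $uv$, the apexes $u$ and $v$ of $K_2\vee Y$ fail to be strongly cospectral with respect to $L$. By Lemma \ref{frpar}, this immediately rules out proper fractional revival between them.

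By Lemma \ref{alphabeta}, since $u$ and $v$ are twins with loop weight $\omega=0$, edge weight $\eta=1$, and $\deg(u)=n+1$, the vector $\mathbf{e}_u-\mathbf{e}_v$ is a Laplacian eigenvector with eigenvalue $\theta=n+2$. The heart of the argument is to exhibit a second, \emph{symmetric} (in $u,v$) eigenvector with the same eigenvalue. I would propose
\[
w \;=\; n\bigl(\mathbf{e}_u+\mathbf{e}_v\bigr)\;-\;2\sum_{y\in V(Y)}\mathbf{e}_y.
\]
A direct block-matrix calculation, using $L_Y\mathbf{1}=\mathbf{0}$ and the fact that joining with $K_2$ increases each degree in $V(Y)$ by $2$, confirms that $Lw=(n+2)w$. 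Note that this verification is essentially independent of the internal structure of $Y$, which is why the result will hold for an arbitrary simple positively weighted $Y$.

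With $w$ in hand, the argument against strong cospectrality is short. If $u$ and $v$ were strongly cospectral, Lemma \ref{strcospchar} would force $\sigma_{uv}^-(L)=\{n+2\}$, i.e., $E_{n+2}\mathbf{e}_u=-E_{n+2}\mathbf{e}_v$. Taking inner products with $w$, which lies in the $(n+2)$-eigenspace and is fixed by the involution swapping $u$ and $v$, would yield
\[
\langle E_{n+2}\mathbf{e}_u,w\rangle=\langle\mathbf{e}_u,w\rangle=n\quad\text{and}\quad\langle E_{n+2}\mathbf{e}_v,w\rangle=\langle\mathbf{e}_v,w\rangle=n.
\]
For $n\geq 1$ these two quantities are equal and nonzero rather than opposite, contradicting $E_{n+2}\mathbf{e}_u=-E_{n+2}\mathbf{e}_v$.

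The main obstacle is spotting $w$. The point is that the weight-one hypothesis is exactly what makes the twin eigenvalue $\theta=n+2\eta$ coincide with the value $n+2$ that naturally arises in $K_2\vee Y$ from the symmetric block combination of $\mathbf{e}_u+\mathbf{e}_v$ and $\sum_{y\in V(Y)}\mathbf{e}_y$. For $\eta\neq 1$ no such collision occurs, so the approach (and arguably the theorem itself) is specific to the weight-one setting, indicating that the hypothesis is essential rather than cosmetic.
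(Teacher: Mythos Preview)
Your proof is correct and follows the same high-level strategy as the paper: show that the apexes $u$ and $v$ are not strongly cospectral with respect to $L$, and then invoke the implication ``proper FR $\Rightarrow$ strongly cospectral'' (you use Lemma~\ref{frpar}; the paper uses Theorem~\ref{frtwinschar}, which contains the same implication).

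The difference is in how the non-strong-cospectrality is established. The paper simply cites \cite[Corollary 6.8(2)]{MonterdeELA} as a black box. You instead give a direct, self-contained argument: you exhibit the symmetric eigenvector $w = n(\mathbf{e}_u+\mathbf{e}_v) - 2\sum_{y\in V(Y)}\mathbf{e}_y$ for the eigenvalue $n+2$, and then observe that the coexistence of the antisymmetric eigenvector $\mathbf{e}_u-\mathbf{e}_v$ and the symmetric eigenvector $w$ in the same eigenspace (with $\langle \mathbf{e}_u,w\rangle = n \neq 0$) is incompatible with $E_{n+2}\mathbf{e}_u = -E_{n+2}\mathbf{e}_v$. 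Your eigenvector verification is correct (it uses only $L_Y\mathbf{1}=\mathbf{0}$ and the degree shift from the join), and your remark that the weight-one hypothesis is exactly what forces the collision $\theta = n+2\eta = n+2$ is a nice conceptual addition that the paper does not spell out. The upshot is that your argument is a bit longer but does not depend on the external reference.
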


\begin{proof}
From \cite[Corollary 6.8(2)]{MonterdeELA}, the apexes of $K_2\vee Y$ are not strongly cospectral. Invoking Theorem \ref{frtwinschar} then yields the desired result.
\end{proof}

For the adjacency and signless Laplacian case, we again assume that $Y$ is a simple unweighted $\ell$-regular graph, where $\ell<n-1$. Let $u$ and $v$ be the apexes of $K_2\vee Y$, which are strongly cospectral by \cite[Corollary 6.9(2)]{MonterdeELA}. Lemma \ref{strcospchar} gives us $\sigma_{uv}^-(A)=\{-1\}$ and $\sigma_{uv}^-(Q)=\{n\}$. Invoking Lemma 4(2a) and Corollary 4(2a) in \cite{Monterde2022} then yields $\sigma_u(A)=\{\lambda^{\pm},-1\}$, where $\lambda^{\pm}=\frac{1}{2}(\ell+1\pm\sqrt{D_1})$ and $D_1=(\ell-1)^2+8n$, and
$\sigma_u(Q)=\{\lambda^{\pm},n\}$, where $\lambda^{\pm}=\frac{1}{2}(2\ell+n+4\pm\sqrt{D_2})$ and $D_2=(2\ell-n)^2+8n$, respectively. If $M=A$, then $\frac{\lambda^+-\theta}{\lambda^+-\lambda^+}=\frac{\ell+3\pm\sqrt{D_1}}{2\sqrt{D_1}}$, while if $M=Q$, then $\frac{\lambda^+-\theta}{\lambda^+-\lambda^+}=\frac{2\ell-n+4+\sqrt{D_2}}{2\sqrt{D_2}}$.
Using the same arguments in the proofs of Theorems \ref{dcadj} and \ref{dcslap}, we get the following characterizations of connected double cones on regular graphs admitting proper FR with respect to $M=A$ and $M=Q$.

\begin{theorem}
\label{ccadj}
Let $Y$ be a simple unweighted $\ell$-regular graph on $n$ vertices with $\ell<n-1$ and $D_1=(\ell-1)^2+8n$. Then adjacency $(e^{i\gamma_k}\cos\gamma_k,ie^{i\gamma_k}\sin\gamma_k)$-FR occurs between the apexes of $K_2\vee Y$ at $\tau_k$ for all integers $k>0$, where $\tau_k=\frac{2k\pi}{\sqrt{D_1}}$, $\gamma_k=\frac{\left(\ell+3+\sqrt{D_1}\right)k\pi}{2\sqrt{D_1}}$, and
$\zeta_k=\frac{\left(\ell-1+\sqrt{D_1}\right)k\pi}{2\sqrt{D_1}}$. The following also hold.
\begin{enumerate}
\item If $D_1$ is not a perfect square, then proper $(e^{i\gamma_k}\cos\gamma_k,e^{i\gamma_k}\sin\gamma_k)$-FR occurs between $u$ and $v$ at each $\tau_k$. Proper PGST also occurs between $u$ and $v$.
\item Let $n=\frac{1}{2}s(\ell-1+s)$ for some integer $s$ such that $s(\ell-1+s)$ is even (i.e., $D_1$ is a perfect square) and $f=\operatorname{gcd}(\ell+s+1,s-2)$. Then $\{\tau_k:k\ \text{is an integer such that}\ \frac{\ell+2s-1}{f}\not\divides k\}$ is the set of all times such that proper FR occurs between $u$ and $v$. The following also hold.
\begin{enumerate}
\item If $\nu_2(\ell+s+1)\neq \nu_2(s-2)$, then neither PST nor balanced FR occurs between $u$ and $v$.
\item If $\nu_2(\ell+s+1)=\nu_2(s-2)$, then PST occurs between $u$ and $v$ with minimum time $\frac{\pi}{2}$ and balanced FR occurs between $u$ and $v$ if and only if $\nu_2(a+b)\geq 2$, where $a$ and $b$ are odd integers such that $\ell+s+1=2^{\nu_2(\ell+s+1)}a$ and $s-2=2^{\nu_2(s-2)}b$.
\end{enumerate}
\end{enumerate}
\end{theorem}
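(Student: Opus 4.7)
The plan is to follow verbatim the template used in the proofs of Theorems \ref{dcadj} and \ref{dcslap}, leveraging the fact that the necessary spectral data for $K_2 \vee Y$ has already been assembled in the paragraph preceding the theorem: the apexes $u,v$ are adjacency strongly cospectral twins with $\sigma_{uv}^{-}(A) = \{-1\}$, $\sigma_{uv}^{+}(A) = \{\lambda^+, \lambda^-\}$ where $\lambda^{\pm} = \frac{1}{2}(\ell+1 \pm \sqrt{D_1})$, and $\theta = -1$. Since $|\sigma_{uv}^{+}(A)| = 2$, Lemma \ref{dc} immediately gives $q = 1$ and $\tau_k = \frac{2k\pi}{\sqrt{D_1}}$, and Corollary \ref{frtwinschar2} furnishes the asserted $(e^{i\zeta_k}\cos\gamma_k, ie^{i\zeta_k}\sin\gamma_k)$-FR at each $\tau_k$; the stated closed forms for $\gamma_k$ and $\zeta_k$ follow by substituting $\lambda^+$ and $\theta = -1$ into $\gamma_k \equiv \tfrac{1}{2}\tau_k(\lambda^+-\theta)$ and $\zeta_k \equiv \tfrac{1}{2}\tau_k(\lambda^++\theta) \pmod{\pi}$.

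For part (1), the key observation is
\[
\frac{\lambda^+ - \theta}{\lambda^+ - \lambda^-} \;=\; \frac{\ell+3+\sqrt{D_1}}{2\sqrt{D_1}} \;=\; \frac{1}{2} + \frac{\ell+3}{2\sqrt{D_1}},
\]
which is irrational whenever $D_1$ is not a perfect square (since $\ell+3 \geq 3$). Theorem \ref{pstpgstfr1}(2) then yields both proper PGST and proper FR at every $\tau_k$.

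For part (2), I parametrize the perfect-square regime by $\sqrt{D_1} = \ell - 1 + 2s$, equivalently $n = \tfrac{1}{2}s(\ell-1+s)$ with $s(\ell-1+s)$ even. A short manipulation gives
\[
\frac{\lambda^+ - \theta}{\lambda^+ - \lambda^-} \;=\; \frac{\ell+s+1}{\ell+2s-1} \;=\; \frac{A}{B}
\]
in lowest terms, where $f = \operatorname{gcd}(\ell+s+1,\ell+2s-1) = \operatorname{gcd}(\ell+s+1, s-2)$, $A = (\ell+s+1)/f$, and $B = (\ell+2s-1)/f$. Since $q = 1$ forces $\operatorname{gcd}(B,q)=1$, Theorem \ref{long} immediately identifies the proper-FR times as $\{\tau_k : B \not\divides k\}$.

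The remaining task is the 2-adic valuation analysis for (2a) and (2b), which exactly mirrors the analogous steps in Theorems \ref{dcadj}(3) and \ref{dcslap}(3). Set $p = \nu_2(\ell+s+1)$ and $r = \nu_2(s-2)$. If $p \neq r$, then $\nu_2(\ell+2s-1) = \nu_2((\ell+s+1)+(s-2)) = \min(p,r) = \nu_2(f)$, so $B$ is odd and Theorem \ref{long}(2a) yields (2a). If $p = r$, write $\ell+s+1 = 2^p a$ and $s-2 = 2^p b$ with $a,b$ odd (as in the statement); then $\ell+2s-1 = 2^p(a+b)$ with $a+b$ even, so $\nu_2(B) = \nu_2(a+b) \geq 1$ and Theorem \ref{long}(2b) supplies both the PST conclusion and the characterization of balanced FR by $\nu_2(a+b) \geq 2$. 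The main obstacle is precisely this last bit of 2-adic bookkeeping: the delicate point is recognizing that the correct case split is on $\nu_2(\ell+s+1)$ versus $\nu_2(s-2)$ (and not on any single valuation of $\ell$ or $s$), and verifying that this case split is exactly the translation of the $\nu_2(q) \gtrless \nu_2(B)$ dichotomy of Theorem \ref{long} into the parameters appearing in the statement.
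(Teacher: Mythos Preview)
Your proposal is correct and follows exactly the approach the paper itself takes: the paper's proof of this theorem consists solely of the sentence ``Using the same arguments in the proofs of Theorems \ref{dcadj} and \ref{dcslap}, we get the following characterizations,'' and you have carried out precisely those arguments, invoking Lemma \ref{dc} and Corollary \ref{frtwinschar2} for the main FR claim, Theorem \ref{pstpgstfr1}(2) for part (1), and Theorem \ref{long} together with the $2$-adic case analysis for part (2). Your observation that $\gcd(\ell+s+1,\ell+2s-1)=\gcd(\ell+s+1,s-2)$ and your clean split on $\nu_2(\ell+s+1)$ versus $\nu_2(s-2)$ are exactly the translations of Theorem \ref{long}'s $\nu_2(q)$ versus $\nu_2(b)$ dichotomy (with $q=1$) into the parameters of the statement.
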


\begin{theorem}
\label{ccslap}
Let $Y$ be a simple unweighted $\ell$-regular graph on $n$ vertices with $\ell<n-1$ and $D_2=(2\ell-n)^2+8n$. Then signless Laplacian $(e^{i\gamma_k}\cos\gamma_k,ie^{i\gamma_k}\sin\gamma_k)$-FR occurs between the apexes of $K_2\vee Y$ at $\tau_k$ for all integers $k>0$, where $\tau_k=\frac{2k\pi}{\sqrt{D_2}}$, $\gamma_k=\frac{\left(2\ell-n+4+\sqrt{D_2}\right)k\pi}{2\sqrt{D_2}}$, and $\zeta_k=\frac{\left(2\ell+3n+4+\sqrt{D_2}\right)k\pi}{2\sqrt{D_2}}$. The following also hold.
\begin{enumerate}
\item If $n=2\ell+4$, then PST occurs between $u$ and $v$ at time $\tau_k$ for every odd $k$.
\item If $n\neq 2\ell+4$ and $D_2$ is not a perfect square, then proper $(e^{i\gamma_k}\cos\gamma_k,e^{i\gamma_k}\sin\gamma_k)$-FR occurs between $u$ and $v$ at each $\tau_k$. Proper PGST also occurs between $u$ and $v$.
\item Let $n\neq 2\ell+4$, $n=\frac{s(2\ell+s)}{s+2}$ for some integer $s$ such that $s(2\ell-n+s)$ is even (i.e., $D_2$ is a perfect square) and $f=\operatorname{gcd}(2\ell-n+s+2,s-2)$. Then $\{\tau_k:k\ \text{is an integer such that}\ \frac{2\ell-n+2s}{f}\not\divides k\}$ is the set of all times such that proper FR occurs between $u$ and $v$. The following also hold.
\begin{enumerate}
\item If $\nu_2(2\ell-n+s+2)\neq \nu_2(s-2)$, then neither PST nor balanced FR occurs between $u$ and $v$.
\item If $\nu_2(2\ell-n+s+2)=\nu_2(s-2)$, then PST occurs between $u$ and $v$ at $\frac{\pi}{2}$ and balanced FR occurs between $u$ and $v$ if and only if $\nu_2(a+b)\geq 2$, where $a$ and $b$ are odd integers such that $s-2=2^{\nu_2(s-2)}b$ and $2\ell-n+s+2=2^{\nu_2(2\ell-n+s+2)}a$.
\end{enumerate}
\end{enumerate}
\end{theorem}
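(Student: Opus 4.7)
The plan is to follow the uniform template established in the proofs of Theorems \ref{dcadj}, \ref{dcslap} and \ref{ccadj}, only swapping in the spectral data for the signless Laplacian of a connected double cone. First I would assemble that data. By \cite[Corollary 6.9(2)]{MonterdeELA}, the apexes $u$ and $v$ of $K_2\vee Y$ are strongly cospectral twins; evaluating (\ref{adjalpha}) with $\deg(u)=n+1$, $\omega=0$, $\eta=1$ gives $\theta=n$, so Lemma \ref{strcospchar} yields $\sigma_{uv}^{-}(Q)=\{n\}$. Combining this with $\sigma_u(Q)=\{\lambda^{\pm},n\}$ from \cite[Corollary 4(2a)]{Monterde2022}, where $\lambda^{\pm}=\tfrac{1}{2}(2\ell+n+4\pm\sqrt{D_2})$, gives $|\sigma_{uv}^{+}(Q)|=2$. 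Lemma \ref{dc} then produces $q=1$ and $\tau_k=2k\pi/\sqrt{D_2}$, and Corollary \ref{frtwinschar2} delivers the stated $\gamma_k$ and $\zeta_k$ via the identities $\lambda^{+}-n=\tfrac{1}{2}(2\ell-n+4+\sqrt{D_2})$ and $\lambda^{+}+n=\tfrac{1}{2}(2\ell+3n+4+\sqrt{D_2})$.

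Next I would dispatch the three cases. For (1), the hypothesis $n=2\ell+4$ forces $2\ell-n+4=0$, so $\gamma_k=k\pi/2$; Lemma \ref{pstpgstfr}(1) then gives PST at $\tau_k$ for every odd $k$. For (2), when $n\ne 2\ell+4$ and $D_2$ is not a perfect square, $2\ell-n+4$ is a nonzero integer while $\sqrt{D_2}$ is irrational, so the ratio $(\lambda^{+}-n)/(\lambda^{+}-\lambda^{-})=(2\ell-n+4+\sqrt{D_2})/(2\sqrt{D_2})$ is irrational, and Theorem \ref{pstpgstfr1}(2) supplies proper PGST and proper FR at every $\tau_k$.

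For (3), the key step is the parameterization. Writing $\sqrt{D_2}=2\ell-n+2s$ and squaring recovers the identity $n=s(2\ell+s)/(s+2)$ subject to $s(2\ell-n+s)$ being even (which encodes the parity constraint needed for $\sqrt{D_2}$ to be a nonnegative integer of the right form). Substituting back, the ratio simplifies to $(2\ell-n+s+2)/(2\ell-n+2s)$. Setting $A=2\ell-n+s+2$ and $B=2\ell-n+2s$ gives $B-A=s-2$, hence $\gcd(A,B)=\gcd(A,s-2)=f$ as stated, and $a/b=(A/f)/(B/f)$ in lowest terms. Applying Theorem \ref{long} with $q=1$ (so $g=\gcd(b,1)=1$), the conclusions reduce to a $2$-adic case split on $\nu_2(b)$. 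If $\nu_2(A)\ne \nu_2(s-2)$, then $\nu_2(B)=\min(\nu_2(A),\nu_2(s-2))=\nu_2(f)$, so $b$ is odd and Theorem \ref{long}(2a) gives (3a). If $\nu_2(A)=\nu_2(s-2)=c$, I write $A=2^c a$ and $s-2=2^c b$ with $a,b$ odd, observe $B=2^c(a+b)$ with $\nu_2(B)\ge c+1$, so $\nu_2(B/f)=\nu_2(a+b)\ge 1$; then Theorem \ref{long}(2b) gives PST with the stated minimum time, and balanced FR occurs precisely when $\nu_2(a+b)\ge 2$, yielding (3b).

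The main obstacle will be the bookkeeping around the parameterization $\sqrt{D_2}=2\ell-n+2s$: I must justify that allowing $s$ to range over all integers (subject to the parity condition) recovers every perfect-square case, and then keep the signs straight when passing from the ratio $(2\ell-n+4+\sqrt{D_2})/(2\sqrt{D_2})$ to the reduced form $A/B$. Once that algebraic translation is secured, the parity analysis is formally identical to the one already carried out in the proofs of Theorems \ref{dcslap} and \ref{ccadj}, with $D_2$ replacing $D_1$ and $\theta=n$ replacing $\theta=-1$.
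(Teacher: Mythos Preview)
Your proposal is correct and follows exactly the route the paper takes: the paper's proof of Theorem \ref{ccslap} is the single sentence ``Using the same arguments in the proofs of Theorems \ref{dcadj} and \ref{dcslap}'', and you have faithfully unpacked that template with the signless Laplacian spectral data for $K_2\vee Y$. If anything, your write-up is more detailed than the paper's, since you explicitly carry out the parameterization $\sqrt{D_2}=2\ell-n+2s$ and the $2$-adic case split rather than merely citing the analogous computations.
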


Kirkland et al.\ characterized disconnected and connected double cones that admit adjacency and signless Laplacian PST. In fact, one can verify that the conditions in Theorems \ref{dcadj}(1,3b) and \ref{dcslap}(1,3b) are equivalent to the conditions provided in \cite[Theorem 11]{Monterde2022} for disconnected double cones to admit PST, while the conditions in Theorems \ref{ccadj}(2b) and \ref{ccslap}(1,3b) are equivalent to the conditions provided in \cite[Theorem 12]{Monterde2022} for connected double cones to admit PST. We end with the following examples.

\begin{example}
\label{kme}
Let $m\geq 4$ and consider the complete graph minus an edge $K_m\backslash e=O_2\vee K_{m-2}$. We invoke Theorem \ref{dcadj} with $\ell=m-3>0$ and $n=m-2$. As $\ell^2+8n=m^2+2m-7$ is not a perfect square for all $m\geq 4$, proper adjacency $(e^{i\gamma_k}\cos\gamma_k,e^{i\gamma_k}\sin\gamma_k)$-FR between $u$ and $v$ in $K_m\backslash e$ at $\tau_k$ for every $k$, where $\tau_k=\frac{2k\pi}{\sqrt{m^2+2m-7}}$ and $\gamma_k=\frac{\left(m-3+\sqrt{m^2+2m-7}\right)k\pi}{2\sqrt{m^2+2m-7}}$. Proper adjacency PGST also occurs between $u$ and $v$.
\end{example}

\begin{example}
Let $n=s+\sqrt{2s+1}-1$ for some integer $s\equiv 0$ (mod 4) such that $2s+1$ is a perfect square and $\sqrt{2s+1}-1\equiv 2$ (mod 4). Assume $X$ is a simple unweighted $\ell$-regular graph on $n$ vertices, where $\ell=\frac{n}{2}+1$. Since $\nu_2(n)=1$ and $\nu_2(s)\geq 2$, we get $\nu_2(n-s)=1$. Invoking Theorem \ref{dcslap}, we get $f=\operatorname{gcd}(n-s,n-s+4)=\operatorname{gcd}(n-s,4)=2$, and for all integers $k$ such that $n-s+2\not\divides k$, proper signless Laplacian $(e^{i\zeta_k}\cos\gamma_k,ie^{i\zeta_k}\sin\gamma_k)$-FR occurs between $u$ and $v$ at time $\tau_k$ for every $k$, where $\tau_k=\frac{k\pi}{n-s+2}$ and $\gamma_k=\frac{(n-s+4)k\pi}{2(n-s+2)}$. Furthermore, since $\nu_2(n-s+4)=\nu_2(n-s)=2$, signless Laplacian PST occurs between $u$ and $v$ with minimum time $\frac{\pi}{2}$. In particular, if we write $n-s+4=2a$ and $n-s=2b$ for some odd $a$ and $b$, then balanced signless Laplacian FR occurs between $u$ and $v$ if and only if $\nu_2(a+b)\geq 2$.
\end{example}

\section*{Acknowledgements}
H.M.\ is supported by the University of Manitoba Faculty of Science and Faculty of Graduate Studies. H.M. would like to thank Steve Kirkland and Sarah Plosker for the guidance and helpful comments.

\bibliographystyle{alpha}
\bibliography{mybibfile}
\end{document}